\documentclass[a4paper,11pt]{amsart}

\usepackage{amsthm,amsmath,amssymb,amsopn}
\usepackage{graphicx,epstopdf,epsfig,subfig}
\usepackage[colorlinks,linktocpage,linkcolor=black]{hyperref}
\usepackage{url,color}
\usepackage{booktabs}
\usepackage{colortbl,multirow}
\usepackage{appendix}
\numberwithin{equation}{section}
\numberwithin{table}{section}
\numberwithin{figure}{section}

\newtheorem{lemma}{Lemma}[section]
\newtheorem{theorem}{Theorem}[section]

\newtheorem{assumption}{Assumption}[section]

\theoremstyle{definition}

\setlength\topmargin{-1cm} \setlength\textheight{220mm}
\setlength\oddsidemargin{0mm}
\setlength\evensidemargin\oddsidemargin \setlength\textwidth{160mm}
\setlength\baselineskip{18pt}

\newcommand\bm{\boldsymbol}
\newcommand{\lj}{[ \hspace{-2pt} [}
\newcommand{\rj}{] \hspace{-2pt} ]}
\newcommand{\mb}[1]{\mathbb{#1}}
\newcommand{\mc}[1]{\mathcal{#1}}
\newcommand{\abs}[1]{\left\lvert#1\right\rvert}
\newcommand{\nm}[2]{\|\,#1\,\|_{#2}}
\newcommand{\snm}[2]{\abs{\,#1\,}_{#2}}
\newcommand{\Lr}[1]{\left(#1\right)}
\newcommand{\jump}[1]{[\![#1]\!]}

\newcommand{\set}[2]{\{\,#1\,\mid\,#2\}}

\newcommand{\wt}[1]{\widetilde{#1}}
\def\MTh{\mc{T}_h}
\def\Lam{\Lambda}
\def\na{\nabla}
\def\Om{\Omega}
\def\pa{\partial}
\DeclareMathOperator{\argmin}{arg min}

\def\dx{\,\mathrm{d}x}
\def\ds{\mathrm{d}s}

\begin{document}

\markboth{R.~LI, P.B.~MING, Z.~SUN,F.~YANG AND Z.~YANG}
{Template for Journal Of Computational Mathematics}

\title{A Discontinuous Galerkin Method by Patch Reconstruction for
  Biharmonic Problem}
\author[R. Li]{Ruo Li}
\address{CAPT, LMAM and School of Mathematical Sciences, Peking
University, Beijing 100871, P. R. China} \email{rli@math.pku.edu.cn}

\author[P.-B. Ming]{Pingbing Ming}
\address{State Key Laboratory of Scientific and Engineering Computing,
Academy of Mathematics and Systems Science, Chinese Academy of Sciences, No.55, Zhong-Guan-Cun East Road,
Beijing, 100190, China}
\address{School of Mathematical Sciences, University of Chinese Academy of Sciences, No. 19A, Yu-Quan Road, Beijing, 100049, China}\email{mpb@lsec.cc.ac.cn}

\author[Z.-Y. Sun]{Zhiyuan Sun}
\address{School of Mathematical Sciences, Peking University, Beijing
  100871, P. R. China} \email{zysun@math.pku.edu.cn}

\author[F.-Y. Yang]{Fanyi Yang}
\address{School of Mathematical Sciences, Peking University, Beijing
  100871, P. R. China} \email{yangfanyi@pku.edu.cn}

\author[Z.-J. Yang]{Zhijian Yang}
\address{School of Mathematics and Statistics, Wuhan University, Wuhan
  430072, P. R. China} \email{zjyang.math@whu.edu.cn}

\maketitle

\begin{abstract}
We propose a new discontinuous Galerkin method based on the
least-squares patch reconstruction for the biharmonic problem. We
prove the optimal error estimate of the proposed method. The
two-dimensional and three-dimensional numerical examples are presented
to confirm the accuracy and efficiency of the method with several
boundary conditions and several types of polygon meshes and polyhedral
meshes.

\noindent\textbf{keyword:} Least-squares problem $\cdot$ Reconstructed
basis function $\cdot$ Discontinuous Galerkin method $\cdot$
Biharmonic problem

\noindent\textbf{MSC2010:} 49N45; 65N21
\end{abstract}

\section{Introduction}\label{sec:introduction}

The biharmonic boundary value problem is a fourth-order elliptic
problem that models the thin plate bending problem in continuum
mechanics, and describes slow flows of viscous incompressible
fluids. Finite element methods have been employed to approximate this
problem from its initial stage and by now there are many successful
finite element methods for this
problem~\cite{ZienkiewiczTaylorZhu:2015}.

Recently, the discontinuous Galerkin (DG)
method~\cite{Hansbo:2002,mozolevski2003priori, suli2007hp,
  mozolevski2007hp,
  georgoulis2008discontinuous,cockburn2009hybridizable} has been
developed to solve the biharmonic problem. The DG method employs
discontinuous basis functions that render great flexibility in the
mesh partition and also provide a suitable framework for
$p$-adaptivity. Higher order polynomials are easily implemented in DG
method, which may efficiently capture the smooth solutions.  To
achieve higher accuracy, DG method requires a large number of degrees
of freedom on a single element, which gives an extremely large linear
system~\cite{hughes2000comparison,zienkiewicz2003discontinuous}.  As a
compromise of the standard FEM and the DG method, Brenner et
al~\cite{engel2002continuous, brenner2011c} developed the so-called
$C^0$ interior penalty Galerkin method ($C^0$ IPG). This method
applies standard continuous $C^0$ Lagrange finite elements to the
interior penalty Galerkin variational formulation, which admits the
optimal error estimate with less local degree of freedoms compared
with standard DG method.

The aim of this work is to apply the patch reconstruction finite element
method proposed in~\cite{li2016discontinuous} to the biharmonic problem.
The main idea of the proposed method is to construct a piecewise
polynomial approximation space by patch reconstruction. The
approximation space is discontinuous across the element face and has
only one degree of freedom located inside each element, which is a
sub-space of the commonly used discontinuous Galerkin finite element
space. One advantage of the proposed method is the number of the unknown
is maintained under a given mesh partition with the increasing of the
order of accuracy. Moreover, the reconstruction procedure can be carried
out over any mesh such as an arbitrary polygonal mesh. Given such
reconstructed approximation space, we solve the biharmonic problem in
the framework of interior penalty discontinuous Galerkin (IPDG). Based
on the approximation properties of the approximation space established
in~\cite{li2012efficient} and~\cite{li2016discontinuous}, we analyze the
proposed method in the framework of IPDG. { The performance of the
proposed method is verified by a series of numerical tests with
different complexity, which is comparable with the $C^0$ IPG method
while the basis functions in our method should be pre-computed,
nevertheless, such basis functions may be reused.}

The article is organized as follows. In
Section~\ref{sec:approximationspace}, we demonstrate the
reconstruction procedure of the approximation space and develop the
corresponding approximation properties of { such a space.} Next,
the { IPDG} method with { such a reconstructed approximation
  space} is proposed and analyzed in Section~\ref{sec:ipdg}. In
Section~\ref{sec:numericalresults}, we test the proposed method by
several two-dimensional and three-dimensional biharmonic boundary
value problems, which include smooth solution as well as nonsmooth
solution for various boundary conditions and different types of
meshes.

Throughout this paper, the constant $C$ is a generic constant that may
change from line to line, but is independent of the mesh size $h$.
\section{Reconstruction Operator}
\label{sec:approximationspace}
Let $\Om\subset\mb{R}^d$ with $d=2, 3$ be a bounded convex domain. Let
$\mc{T}_h$ be a collection of $Ne$ polygonal elements that partition
$\Om$. We denote all interior faces of $\mc{T}_h$ as $\mc{E}_h^i$ and
the set of the boundary faces as $\mc{E}_h^b$, and then
$\mc{E}_h=\mc{E}_h^i\cup\mc{E}_h^b$ is then a collection of all
$(d-1)$-dimensional faces of all elements in $\mc{T}_h$. Let
$h_K=\text{diam}K$ and $h=\max_{K\in\mc{T}_h}h_K$. We assume that
$\mc{T}_h$ satisfies the shape-regular conditions used in~\cite{
  antonietti:2013, brezzi2005family}, which read: there exist
\begin{enumerate}
  \item[-] an integer number $N$ independent of $h$;
  \item[-] a real positive number $\sigma$ independent of $h$;
  \item[-] a compatible sub-decomposition $\widetilde{\mathcal T_h}$
    into shape-regular triangles or quadrilaterals, or mix of both
    triangles and quadrilaterals,
\end{enumerate}
such that
\begin{itemize}
  \item[(A1)] any polygon $K\in\mathcal T_h$ admits a decomposition
    $\wt{\mc{T}_h}$ formed by less than $N$ triangles;

  \item[(A2)] any triangle $T\in\wt{\mc{T}_h}$ is shape-regular in the
    sense that there exists $\sigma$ satisfying $h_K/\rho_K\leq
    \sigma$, where $\rho_K$ is the radius of the largest ball
    inscribed in $K$.
\end{itemize}

The above regularity assumptions lead to some useful consequences,
which will be extensively used in the later analysis.
\begin{enumerate}
\item[{\bf M1}]For any triangle $T\in\wt{\MTh}$, there exists
  $\rho_1\ge 1$ that depends on $N$ and $\sigma$ such that
  $h_K/h_T\le\rho_1$.

\item[{\bf M2}][{\it Agmon inequality}]\;There exists $C$ that depends
  on $N$ and $\sigma$, but independent of $h_K$ such that
\begin{equation}\label{eq:agmon}
\nm{v}{L^2(\pa K)}^2\le C\Lr{h_K^{-1}\nm{v}{L^2(K)}^2+h_K\nm{\na
    v}{L^2(K)}^2}\qquad\text{for all\quad}v\in H^1(K).
\end{equation}

\item[{\bf M3}][{\it Approximation property}]\;There exists $C$ that
  depends on $N$ and $\sigma$, but independent of $h_K$ such that for
  any $v\in H^{m+1}(K)$, there exists an approximating polynomial
  $\wt{v}_m\in\mb{P}_m(K)$ such that
\begin{equation}\label{eq:app}
\begin{aligned}
  \nm{v-\wt{v}_m}{L^2(K)}&+h_K\nm{\na(v-\wt{v}_m)}{L^2(K)}\\
  &\quad+h_K^2\nm{\na^2(v-\wt{v}_m)}{L^2(K)}
  \le Ch_K^{m+1}\snm{v}{H^{m+1}(K)}.
\end{aligned}
\end{equation}

\item[{\bf M4}][{\it Inverse inequality}]\;For any $v\in\mb{P}_m(K)$,
  there exists a constant $C$ that depends only on $N$ and $\sigma$
  such that
\begin{equation}\label{eq:inverse}
\nm{\na v}{L^2(K)}\le Cm^2/h_K\nm{v}{L^2(K)}.
\end{equation}
\end{enumerate}

Given the triangulation $\MTh$, we define the reconstruction operator
as follows. First, for each element $K$, we prescribe a point $x_K$ as
the collocation point. In particular, we may specify $x_K$ as the
barycenter of $K$, although it could be more flexible. Second, we
construct an element patch $S(K)$ that consists of $K$ itself and some
elements around $K$. The element patch can be built in two ways. The
first way is that we initialize $S(K)$ with $K$, and we add all Von
Neumann neighbours of the elements into $S(K)$ in a recursive manner
until we have collected enough large number of elements; see
Figure~\ref{fig:buildpatch} for such an example of $S(K)$ with
$\#S(K)=12$, where $\#S(K)$ the number of elements inside
$S(K)$. Another way to construct $S(K)$ has been reported
in~\cite{li2012efficient}. Denote by $\mc{I}_K$ the set of the
collocation points that belong to $S(K)$. It is clear that
$\#\mc{I}_K=\# S(K)$.
\begin{figure}
  \centering
  \includegraphics[width=0.8\textwidth]{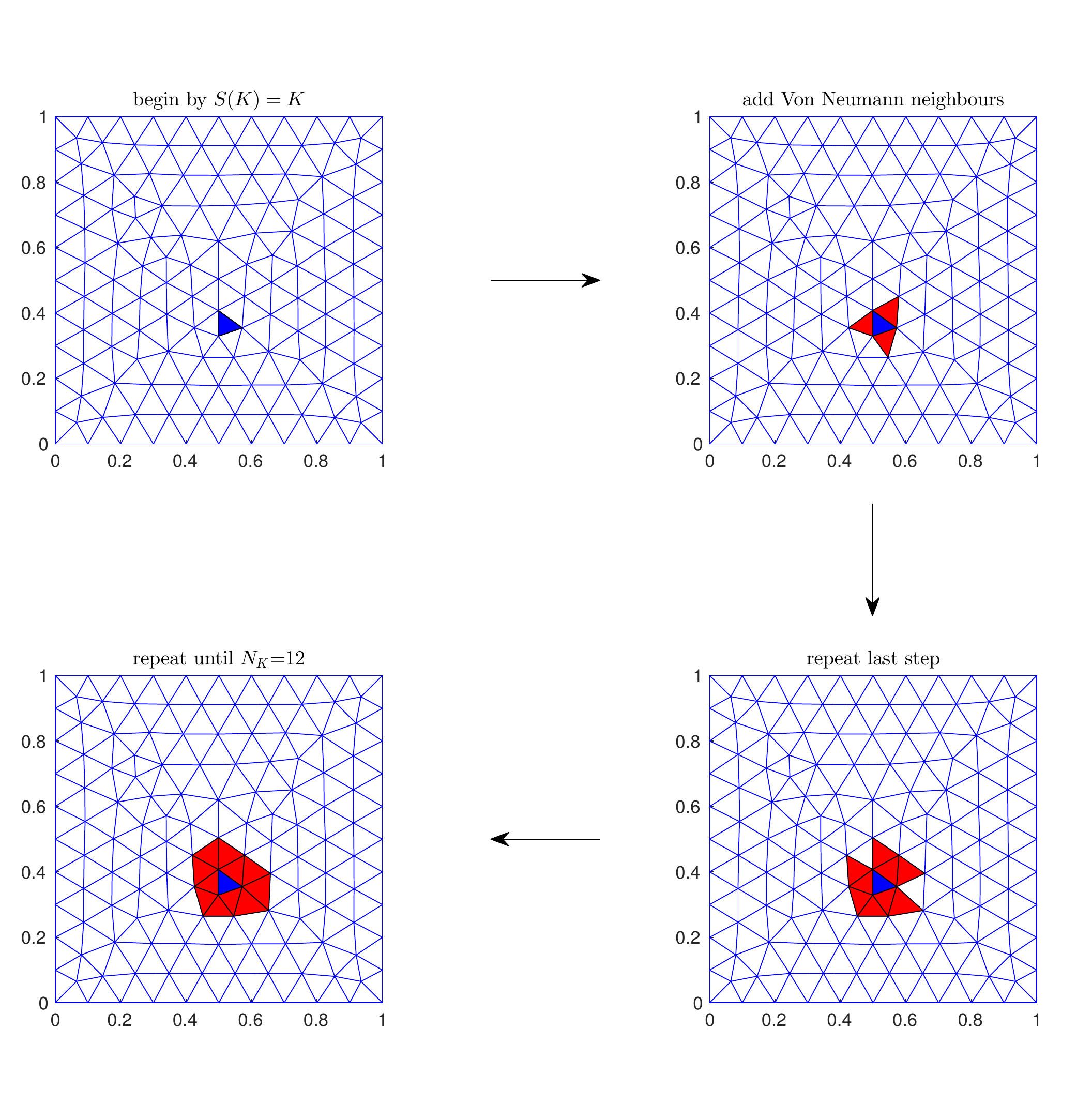}
  \caption{Build an element patch.}
  \label{fig:buildpatch}
\end{figure}

Let $U_h$ be the piecewise constant space associated with $\MTh$,
i.e.,
\[
  U_h{:}=\set{v\in L^2(\Om)}{v|_K \in \mb{P}_0(K),\ \text{for
      all\quad} K\in \MTh},
\]
where $\mb{P}_r$ is the polynomial space of degree not greater than
$r$.

For any function $g\in U_h$, we reconstruct a polynomial $\mc{R}_K g$
of degree $m$ on $S(K)$ by solving the following least-squares:
\begin{equation} \label{eq:lsproblem}
  \mathcal R_Kg= \argmin_{p\in\mb{P}_m(S(K))} \ \sum_{x\in
    \mc{I}_K}\abs{g(x)-p(x)}^2.
\end{equation}

The uniqueness condition for Problem~\eqref{eq:lsproblem} relates to
the location of the collocation points and $\# S(K)$. Following
\cite{li2012efficient, li2016discontinuous}, we make the following
assumption:
\begin{assumption}\label{as:unique}
For all $K\in\MTh$ and $p\in \mb{P}_m(S(K))$,
\begin{equation}
    p|_{\mc{I}_K}=0\quad\text{implies\quad} p|_{S(K)}\equiv0,
\end{equation}
\end{assumption}
The above assumption guarantees the uniqueness of the solution of
Problem~\eqref{eq:lsproblem} if $\# S(K)$ is greater than
$\text{dim}\,\mb{P}_m$. Hereafter, we assume that this assumption is
always valid.

From~\eqref{eq:lsproblem} we define the global reconstruction operator
$\mathcal R$ for $g\in U_h$ by restricting the polynomial $\mathcal
R_Kg$ on $K$, { \(\mc{R}g|_K = (\mc{R}_Kg)|_K\). Therefore, the
  operator $\mc R$ defines a map from $U_h$ to a discontinuous
  piecewise polynomial space with order $m$, which is denoted as $V_h
  = \mc R U_h$.

Next we investigate the behaviour of the basis functions, which are
generated by the reconstruction process. Let $\bm e_K \in \mathbb
R^{Ne}$, whose components are given by
\begin{displaymath}
  e_{K, \widetilde K} = \delta_{K, \widetilde K}, \quad \text{for
    all\quad} \widetilde K \in \mc T_h,
\end{displaymath}
where
\begin{displaymath}
  \delta_{K,\widetilde K}=\begin{cases} 1,\ K=\widetilde
  K,\\ 0,\ K\neq\widetilde K.\\
  \end{cases}
\end{displaymath}
Then we denote $\{ \lambda_K | \lambda_K = \mathcal R\bm e_K$
$\text{for all\quad} K\in\mathcal T_h \}$ as a group of basis
functions. Given $\{\lambda_K\}$ we may write the reconstruction
operator in an explicit way:
\begin{equation}
  \mathcal Rg=\sum_{K\in\mathcal T_h}g(\bm x_K)\lambda_K(\bm
  x),\quad\text{for all\quad} g\in U_h.
  \label{eq:interpexpression}
\end{equation}
The operator $\mc R$ may be defined for continuous functions posed on
$\Om$ as~\eqref{eq:interpexpression} .

The support of $\lambda_K$ can be described as:
\begin{displaymath}
  \text{supp}\lambda_K=\bigcup_{ K'\in S(K)}\overline{K'}.
\end{displaymath}
A one-dimensional example is presented in
Section~\ref{sec:numericalresults} and Figure~\ref{fig:1d_basis}.

It is worthwhile to mention that firstly the support of the basis
function may not be equal to the element patch, and vice versus;
secondly, the basis function is discontinuous, which lends itself
ideally to the DG framework.}

To state the method, we introduce some notations. Define the broken
Sobolev spaces:
\[
  H^s(\Omega,\MTh)=\set{u\in L^2(\Om)}{u|_K \in H^s(K)\;\text{for
      all\;} K\in\MTh},
\]
where $H^s(K)$ is the standard Sobolev space on element $K$ associated
with the norm $\|\cdot\|_{H^s(K)}$ and the seminorm
$\abs{\cdot}_{H^s(K)}$. The associated broken norm and seminorm are
defined respectively by
\[
  \nm{u}{H^s(\Omega,\MTh)}^2=
  \sum_{K\in\MTh}\|u\|_{H^s(K)}^2,\quad\text{and}\quad
  \abs{u}_{H^s(\Om,\MTh)}^2= \sum_{K\in\MTh}\abs{u}_{H^s(K)}^2.
\]

For two neighbouring elements $K^+, K^-$ that share a common face
$e=\partial K^+\cap\partial K^-$ with $e\in\mc{E}_h^i$, denote by
$n^+$ and $n^-$ the outward unit normal vectors on $e$, corresponding
to $\partial K^+$ and $\partial K^-$, respectively. For any function
$q\in H^1(\Omega,\MTh)$ and $v\in [H^1(\Omega,\MTh)]^d$ that may be
discontinuous across interelement boundaries, we define the
\emph{average} operator $\{\cdot\}$ and the \emph{jump} operator
$\lj\cdot\rj$ as
\[
  \{q\}=\frac12(q^++q^-), \quad\{v\}=\frac12(v^++v^-),
\]
and
\[
  \jump{q}=n^+q^++n^-q^-, \quad\jump{v}= n^+\cdot v^++n^-\cdot v^-.
\]
Here $q^+=q|_{K^+},\ v^+=v|_{K^+}$ and
$q^-=q|_{K^-},\ v^-=v|_{K^-}$. In the case $e\in\mc{E}_h^b$, there
exists an element $K$ such that $e=K\cap\partial\Omega$, the
definitions are changed to
\[
  \{q\}=q|_{K\cap\partial\Omega},\quad \jump{q}=
  nq|_{K\cap\partial\Omega},
\]
and
\[
  \{v\}=v|_{K\cap\partial\Omega},\quad \jump{v}=n\cdot v|_{K
    \cap\partial\Omega}.
\]

Following~\cite{suli2007hp}, for any $w\in H^2(\Omega,\MTh)$, we
define the energy norm $\nm{\cdot}{h}$ as
\[
 \nm{w}{h}^2=\sum_{K\in\MTh}\nm{\Delta w}{L^2(K)}^2 +
 \nm{h_e^{-3/2}\jump{w}}{L^2(\mc{E}_h)}^2 +\nm{h_e^{-1/2}\jump{\na w
 }}{L^2(\mc{E}_h)}^2,
\]
where
\[
\nm{h_e^{-3/2}\jump{w}}{L^2(\mc{E}_h)}^2{:}=\sum_{e\in\mc{E
  }_h^i}h_e^{-3}\nm{\jump{w}}{L^2(e)}^2\qquad \nm{h_e^{-1/2}\jump{\na
    w
}}{L^2(\mc{E}_h)}^2{:}=\sum_{e\in\mc{E}_h^i}h_e^{-1}\nm{\jump{\na
    w}}{L^2(e)}^2.
\]
This energy norm $\nm{w}{h}$ is equivalent to
$\nm{w}{H^2(\Omega,\MTh)}$.

{ Next, we turn to the stability estimate of the reconstruction
  operator $\mathcal R$. By~\cite{li2012efficient}, we define
  $\Lambda(m,\mc I_K)$ for any element $K \in \mc T_h$ as
\begin{equation}\label{eq:constant}
  \Lambda(m, \mathcal{I}_K) \triangleq \max_{p\in \mathbb{P}_m(S(K))}
  \frac{\max_{x\in S(K)} |p(\bm{x})|}{\max_{x\in \mathcal{I}_K}
    |p(x)|}.
\end{equation}
By Assumption~\ref{as:unique}, we may conclude that $\Lambda(m, \mc
I_K)$ is finite. With some further conditions on $S(K)$, we conclude
that $\Lambda(m, \mc I_K)$ has a uniform upper bound, which is denoted
by $\Lambda_m$. One of such condition may be found in the following
lemma.
\begin{lemma}~\cite[Lemma 3.5]{li2012efficient}
If each element patch $S(K)$ is convex and the triangulation is
quasi-uniform, then $\Lambda_m$ is uniformly bounded.
\end{lemma}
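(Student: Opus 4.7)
My plan is to combine an affine scaling with a compactness argument on the resulting ``unit-size'' configuration space. Since $\mathbb{P}_m$ is a finite-dimensional linear space, both the $L^\infty$ norm on a fixed convex body containing the collocation points and the discrete maximum at those points are norms on $\mathbb{P}_m$ (the latter by Assumption~\ref{as:unique}), hence equivalent; the task is to prove that the norm-equivalence constant can be chosen \emph{uniformly} in $K \in \mc{T}_h$ and in $h$. Convexity of $S(K)$ plus quasi-uniformity will supply exactly the compactness needed.

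First I would perform the affine change of variables $\hat{x} = (x - x_K)/h_K$ and denote by $\hat{S}(K)$ and $\hat{\mc{I}}_K$ the images of $S(K)$ and $\mc{I}_K$. Since polynomials of degree $m$ transform into polynomials of degree $m$ under affine maps, and since both the supremum over $S(K)$ and the maximum over $\mc{I}_K$ in~\eqref{eq:constant} are invariant under this pull-back, $\Lambda(m,\mc{I}_K) = \Lambda(m,\hat{\mc{I}}_K)$ with $\hat{S}(K)$ replacing $S(K)$ in the definition. So the problem reduces to bounding $\Lambda$ uniformly over the scaled configurations.

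Next I would verify that the scaled configurations $(\hat{S}(K),\hat{\mc{I}}_K)$ all lie in a compact family. Convexity of $S(K)$ is preserved under affine maps, so $\hat{S}(K)$ is convex. The assumption (A1) on the number of sub-triangles and property (M1) bound $\# S(K)$ and hence the diameter of $\hat{S}(K)$ from above; quasi-uniformity combined with shape regularity guarantees that each sub-triangle in $\hat{S}(K)$ has diameter bounded below by a positive constant, so $\hat{S}(K)$ contains a ball of some fixed radius $r_0 > 0$, and any two collocation points in $\hat{\mc{I}}_K$ are separated by a uniform constant $\delta > 0$. After translating so that $\bm x_K$ is mapped to the origin, the collection of possible $(\hat{S}(K),\hat{\mc{I}}_K)$ is therefore precompact by the Blaschke selection theorem (for the convex bodies, in Hausdorff distance) and by the Bolzano--Weierstrass theorem (for the finite, uniformly separated, uniformly bounded point sets).

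Finally I would argue that the functional $(\hat{\Omega},\{Y_i\}) \mapsto \Lambda(m;\hat\Omega,\{Y_i\})$ is continuous on this compact configuration space and, at every point of it, finite: finiteness follows because a limit configuration inherits the uniform separation $\delta$ of the points and therefore still contains $\dim\mathbb{P}_m$ points in sufficiently general position for Assumption~\ref{as:unique} to apply (a vanishing polynomial at the limit points would, by continuity in the coefficients, force a nonzero polynomial to be exceedingly small at the pre-limit collocation points, contradicting Assumption~\ref{as:unique} quantified on the finite-dimensional unit sphere of $\mathbb{P}_m$). A continuous function attains its maximum on a compact set, and this maximum is the desired uniform bound $\Lambda_m$. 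The delicate step---and the one I would expect to require the most care---is making the passage-to-the-limit rigorous so that Assumption~\ref{as:unique} survives, since it is formulated element-wise; the convexity of $S(K)$ and quasi-uniformity are essential here, as they prevent the collocation points from collapsing or the patch from degenerating into a lower-dimensional set in the limit.
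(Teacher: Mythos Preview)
The paper does not give a proof of this lemma; it is quoted from \cite[Lemma~3.5]{li2012efficient} and used as a black box, so there is no in-paper argument against which to compare your proposal directly.

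On the merits: your rescaling-plus-compactness scaffold is natural, but the step you yourself flag as ``delicate'' is a genuine gap, and your proposed patch for it is circular. You need the limit configuration $(\hat\Omega,\{Y_i\})$ to remain unisolvent for $\mb{P}_m$, and you try to infer this from uniform separation of the $Y_i$ and non-degeneracy of the convex body $\hat\Omega$. That implication is false in general: already for $m=2$ in the plane, six well-separated points inside a disc can all lie on a common conic, so a nonzero quadratic vanishes on them. Your contradiction argument---that a polynomial vanishing at the limit points would be ``exceedingly small at the pre-limit collocation points, contradicting Assumption~\ref{as:unique} quantified on the unit sphere''---assumes precisely what is to be proved: Assumption~\ref{as:unique} only guarantees that each individual $\Lambda(m,\mc{I}_K)$ is finite, whereas a \emph{uniform} lower bound on $\max_{\mc{I}_K}|p|$ over the unit sphere of $\mb{P}_m$ is equivalent to the uniform bound on $\Lambda_m$ you are trying to establish. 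If some limit configuration fails unisolvence, nothing in your hypotheses rules out $\Lambda(m,\mc{I}_{K_n})\to\infty$ along the approaching sequence, and the compactness argument stalls. The argument in \cite{li2012efficient} sidesteps this by a direct estimate rather than a limit: convexity of $S(K)$ enters through a Markov-type inequality that bounds $\|\nabla p\|_{L^\infty(S(K))}$ by a constant times $\|p\|_{L^\infty(S(K))}$ divided by the width of $S(K)$, and quasi-uniformity makes the collocation points a net of $S(K)$ with fill distance comparable to that width; a mean-value step then yields the bound on $\Lambda(m,\mc{I}_K)$ without ever passing to a limit configuration or invoking unisolvence of anything beyond the original $\mc{I}_K$.
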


The above condition is not necessary, we refer
to~\cite{li2016discontinuous} for the discussion on the uniform upper
bound for non-convex element patch.

With the uniformly bounded $\Lam_m$, we have the following
quasi-optimal approximation estimates for the reconstruction operator
in the maximum norm.
\begin{lemma}\cite[Theorem 3.3]{li2012efficient}
  If Assumption~\ref{as:unique} holds, the stability estimate holds
  true for any $K \in \mc T_h$ and $g \in C^0(S(K))$ as
  \begin{equation}
    \|g - \mc R g\|_{L^\infty(K)} \leq C \Lambda_m \inf_{ p \in
      \mathbb P_m (S(K))} \| g - p\|_{L^\infty(S(K))},
    \label{eq:inftyestimate}
  \end{equation}
  where $C$ is independent of $h$ but depends on $\# S(K)$.
\end{lemma}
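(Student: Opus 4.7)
The plan is to exploit the polynomial reproduction of $\mc R_K$ together with the maximum-norm constant $\Lambda(m,\mc I_K)$ defined in~\eqref{eq:constant}. First I would observe that, by the uniqueness guaranteed by Assumption~\ref{as:unique}, for any polynomial $q\in\mb P_m(S(K))$ the minimizer in~\eqref{eq:lsproblem} must be $q$ itself, so $\mc R_K q=q$ on $S(K)$. Consequently, for every $p\in\mb P_m(S(K))$ we may split $g-\mc R g=(g-p)-\mc R(g-p)$ on $K$ and apply the triangle inequality to obtain
\[
\|g-\mc R g\|_{L^\infty(K)}\le \|g-p\|_{L^\infty(K)}+\|\mc R(g-p)\|_{L^\infty(K)}.
\]

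The core of the argument is to bound $\|\mc R(g-p)\|_{L^\infty(K)}$ in terms of $\|g-p\|_{L^\infty(S(K))}$. Since $\mc R(g-p)|_K$ is the restriction to $K$ of the polynomial $\mc R_K(g-p)\in\mb P_m(S(K))$, the very definition of $\Lambda(m,\mc I_K)$ in~\eqref{eq:constant} gives
\[
\|\mc R(g-p)\|_{L^\infty(K)}\le \|\mc R_K(g-p)\|_{L^\infty(S(K))}\le \Lambda_m\max_{x\in\mc I_K}\bigl|(\mc R_K(g-p))(x)\bigr|,
\]
reducing the task to controlling $\mc R_K(g-p)$ at the collocation points only.

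For this last step I would invoke the least-squares optimality directly: choosing the admissible candidate $p'\equiv 0$ in~\eqref{eq:lsproblem} applied to the data $g-p$ yields
\[
\sum_{x\in\mc I_K}\bigl|(g-p)(x)-(\mc R_K(g-p))(x)\bigr|^2\le \sum_{x\in\mc I_K}\bigl|(g-p)(x)\bigr|^2\le \#\mc I_K\,\|g-p\|_{L^\infty(S(K))}^2,
\]
whence at every $x\in\mc I_K$ one gets $|(\mc R_K(g-p))(x)|\le (1+\sqrt{\#\mc I_K})\,\|g-p\|_{L^\infty(S(K))}$ by one more triangle inequality. Inserting this into the previous display, combining with the splitting, and taking the infimum over $p\in\mb P_m(S(K))$ produces the desired bound with a constant of the form $C(1+\Lambda_m)$, where $C$ depends on $\#\mc I_K=\#S(K)$ but not on $h$.

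The main obstacle is not any single hard estimate but rather keeping track of how the patch cardinality and $\Lambda_m$ enter the final constant. Since $\#S(K)$ is uniformly bounded by the shape-regularity Assumptions (A1)--(A2) and $\Lambda_m$ is uniformly bounded under the convex-patch/quasi-uniform hypothesis cited above, the estimate is automatically independent of $h$, and no additional ingredients beyond polynomial reproduction, least-squares optimality, and the definition of $\Lambda_m$ are needed.
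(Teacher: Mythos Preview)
Your argument is correct. The paper itself does not supply a proof of this lemma; it is quoted from~\cite{li2012efficient} and stated without proof here. Your approach---polynomial reproduction of the least-squares reconstruction under Assumption~\ref{as:unique}, the splitting $g-\mc R g=(g-p)-\mc R_K(g-p)$, the Lebesgue-type bound via $\Lambda(m,\mc I_K)$, and control of $\mc R_K(g-p)$ at the collocation points through least-squares optimality with the trivial competitor $p'=0$---is exactly the standard route for such stability estimates and is essentially what one finds in the cited reference.

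One minor remark on your closing paragraph: you assert that $\#S(K)$ is uniformly bounded by the shape-regularity assumptions (A1)--(A2). That is not how the present paper sets things up; the patch cardinality $\#S(K)$ is \emph{prescribed} by the user when building the element patch (cf.\ Table~\ref{tab:patchnumber2d}), not derived from mesh regularity. This does not affect the proof of the lemma itself, since the statement already permits $C$ to depend on $\#S(K)$; it only concerns your side comment about $h$-independence.
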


The above lemma immediately implies the quasi-optimal approximation
results in other norms.
\begin{lemma}
Let $g\in H^t(\Omega)(t\geq 2)$ and $ K\in\mathcal T_h$, there exists
$C$ independent of $h$ but depends on $\# S(K)$ such that for
$q=0,1,2$,
\begin{equation}
    \|g-\mathcal Rg\|_{H^q(K)}\leq C\Lambda_{m}
    h^{s-q}\|g\|_{H^{t}(S(K),\mathcal T_h)},
    \label{eq:reconSobleverror}
  \end{equation}
and for $q=1,2$
\begin{equation} \label{eq:recontraceinequality}
    \|\na^q(g-\mathcal R g)\|_{L^2(\partial K)}\leq C\Lambda_{m}
    h^{s-q-1/2}\|g\|_{H^t(S(K),\mathcal T_h)}.
  \end{equation}
where $s=\min(m+1,t)$.
\end{lemma}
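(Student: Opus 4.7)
The plan is to combine the $L^\infty$-stability estimate~\eqref{eq:inftyestimate} with a Bramble--Hilbert polynomial on the patch $S(K)$, the inverse inequality {\bf M4}, and the Agmon trace inequality {\bf M2}. The starting point is that Assumption~\ref{as:unique} makes $\mc{R}$ act as the identity on $\mb{P}_m$: for any $p\in\mb{P}_m(S(K))$, $p$ itself attains zero residual in \eqref{eq:lsproblem}, and the assumption forces uniqueness, so $\mc{R}p|_K = p|_K$.

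First I would produce, by a Bramble--Hilbert argument applied piece-by-piece over the shape-regular sub-decomposition $\wt{\MTh}$ of $S(K)$ guaranteed by (A1)--(A2), a single polynomial $p_g\in\mb{P}_m(S(K))$ satisfying
\begin{equation*}
  \|g-p_g\|_{H^j(S(K),\MTh)}\le Ch^{s-j}\|g\|_{H^t(S(K),\MTh)},\quad j=0,1,\ldots,s,
\end{equation*}
together with, via the Sobolev embedding $H^s\hookrightarrow L^\infty$ available for $s>d/2$,
\begin{equation*}
  \|g-p_g\|_{L^\infty(S(K))}\le Ch^{s-d/2}\|g\|_{H^t(S(K),\MTh)}.
\end{equation*}
Writing $g-\mc{R}g=(g-p_g)-\mc{R}(g-p_g)$ and noting that $\mc{R}(g-p_g)|_K\in\mb{P}_m(K)$, the first piece is controlled directly by the Bramble--Hilbert bound. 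For the second I pass to $L^\infty(K)$ via~\eqref{eq:inftyestimate} and return to $L^2(K)$ at a cost of $h^{d/2}$, obtaining $\|\mc{R}(g-p_g)\|_{L^2(K)}\le C\Lam_m h^{s}\|g\|_{H^t(S(K),\MTh)}$. For $q=1,2$ the inverse inequality {\bf M4} applied to $\mc{R}(g-p_g)|_K$ inserts the factor $h^{-q}$, giving \eqref{eq:reconSobleverror} after a triangle inequality.

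For the trace estimate \eqref{eq:recontraceinequality}, Agmon's inequality \eqref{eq:agmon} applied on each sub-triangle of $K$ yields
\begin{equation*}
  \|\na^q(g-\mc{R}g)\|_{L^2(\pa K)}^2\le C\Lr{h^{-1}\|\na^q(g-\mc{R}g)\|_{L^2(K)}^2+h\|\na^{q+1}(g-\mc{R}g)\|_{L^2(K)}^2}.
\end{equation*}
The first summand is bounded via \eqref{eq:reconSobleverror}. For the second I split once more as $\na^{q+1}(g-p_g)-\na^{q+1}\mc{R}(g-p_g)$: the polynomial part is handled by one further application of {\bf M4}, while the $g-p_g$ part is controlled by the $j=q+1$ case of the Bramble--Hilbert estimate of the first step. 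Both contribute $Ch^{s-q-1/2}\Lam_m\|g\|_{H^t(S(K),\MTh)}$, as required.

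The main obstacle is the very first step: producing a \emph{single} polynomial $p_g\in\mb{P}_m(S(K))$ on the possibly non-simply-connected patch that satisfies Bramble--Hilbert bounds in every Sobolev norm used downstream. Because $S(K)$ is only a union of up to $N$ shape-regular sub-triangles of comparable diameter $h$, one cannot directly quote the standard single-element estimate {\bf M3}; a scaling argument on a reference patch configuration, or the construction already recorded in~\cite{li2012efficient,li2016discontinuous}, is needed, with the constants naturally picking up a dependence on $\#S(K)$. Everything that follows is a routine bookkeeping exercise with the inverse and trace inequalities.
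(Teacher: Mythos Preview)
Your argument is correct, but the paper takes a slightly leaner route that sidesteps exactly the ``main obstacle'' you identify. For $q=0$ the paper does the same as you: pass through $L^\infty(K)$ via~\eqref{eq:inftyestimate}, pick up $|K|^{1/2}$, and use an $L^\infty$ approximation on $S(K)$. For $q=1,2$, however, the paper does \emph{not} build a patch-wide Bramble--Hilbert polynomial $p_g\in\mb{P}_m(S(K))$. Instead it splits with the purely local polynomial $g_m\in\mb{P}_m(K)$ furnished by {\bf M3}:
\[
\nm{\na^q(g-\mc{R}g)}{L^2(K)}\le\nm{\na^q(g-g_m)}{L^2(K)}+\nm{\na^q(g_m-\mc{R}g)}{L^2(K)},
\]
applies the inverse inequality {\bf M4} to the second term (which is a polynomial on $K$), and then bounds $\nm{g_m-\mc{R}g}{L^2(K)}\le\nm{g-g_m}{L^2(K)}+\nm{g-\mc{R}g}{L^2(K)}$, the last piece being the already-established $q=0$ case. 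The trace estimate then follows directly from Agmon and the volume bounds. The advantage of the paper's decomposition is that it uses only the element-local approximation {\bf M3}, which is given as a mesh hypothesis, together with bootstrapping from $q=0$; your approach is equally valid but requires the extra work of producing a single polynomial on the union $S(K)$ with simultaneous approximation in all $H^j$ norms, which is precisely the step you flag as delicate.
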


\begin{proof}
  By the standard interpolation estimate and~\eqref{eq:inftyestimate},
  we obtain
  \[
    \begin{aligned}
      \|g - \mc R g\|_{L^2(K)} &\leq |K|^{1/2} \|g - \mc R_m
      g\|_{L^\infty(K)} \\ &\le C|K|^{1/2}\Lambda_m\inf_{ p \in
        \mathbb P^m (S(K))} \| g - p\|_{L^\infty(S(K))} \\ &\le
      C\Lambda_m h^s\|g\|_{H^t(S(K), \mc T_h)},
    \end{aligned}
  \]
  which gives~\eqref{eq:reconSobleverror} with $q=0$.
  
Let $g_m$ be the approximation polynomial in~\eqref{eq:app} for $g$,
using~\eqref{eq:inverse}, we obtain
\begin{align*}
  \nm{\na(g-\mc{R}g)}{L^2(K)}&\le\nm{\na(g-g_m)}{L^2(K)}+\nm{\na(g_m-\mc{R}g)}{L^2(K)}\\
  &\le\nm{\na(g-g_m)}{L^2(K)}+Ch_K^{-1}\nm{g_m-\mc{R}g}{L^2(K)}\\
  &\le\nm{\na(g-g_m)}{L^2(K)}+Ch_K^{-1}\nm{g-g_m}{L^2(K)}+Ch_K^{-1}\nm{g-\mc{R}g}{L^2(K)}\\
  &\le Ch_K^s\nm{g}{H^t(S(K),\mc{T}_h)},
\end{align*}
which yields~\eqref{eq:reconSobleverror} with $q=1$ by
using~\eqref{eq:reconSobleverror} with $q=0$ in the last step.

Proceeded along the same line that leads
to~\eqref{eq:reconSobleverror} with $q=1$, we
obtain~\eqref{eq:reconSobleverror} with $q=2$.

Using {\em Agmon inequality}~\eqref{eq:agmon}
and~\eqref{eq:reconSobleverror}, we
obtain~\eqref{eq:recontraceinequality}, which completes the proof.
\end{proof}

Using the above lemma, we obtain the following interpolation estimate
for the reconstruction operator in the energy norm.
\begin{theorem}\label{th:interpolationerrorDG}
  Let $g\in H^t(\Omega)$ with $t\ge 2$. There exists a constant $C$
  independent of $h$ such that
  \begin{equation}\label{eq:interpolation}
    \nm{g-\mc{R}g}{h}\le C\Lam_m h^{s-2}\|g\|_{H^t(\Omega,\MTh)},
    \end{equation}
    where $s=\min(m+1,t)$.
\end{theorem}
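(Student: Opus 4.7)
The plan is to bound each of the three terms making up $\nm{g-\mc{R}g}{h}^2$ separately by $h^{2(s-2)}\nm{g}{H^t(\Om,\MTh)}^2$ and then sum. The volumetric term is immediate: since $\nm{\Delta(g-\mc{R}g)}{L^2(K)}\le C\nm{\na^2(g-\mc{R}g)}{L^2(K)}$, Lemma's estimate \eqref{eq:reconSobleverror} with $q=2$ yields
\[
\sum_{K\in\MTh}\nm{\Delta(g-\mc{R}g)}{L^2(K)}^2
\le C\Lam_m^2 h^{2(s-2)}\sum_{K\in\MTh}\nm{g}{H^t(S(K),\MTh)}^2.
\]
The patches $S(K)$ overlap with bounded multiplicity thanks to assumptions (A1)--(A2), so the outer sum of $\nm{g}{H^t(S(K),\MTh)}^2$ is controlled by $C\nm{g}{H^t(\Om,\MTh)}^2$.

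For the two face terms, the key observation is that $g\in H^t(\Om)$ with $t\ge 2$ is continuous across interior faces and so is $\na g$, so $\jump{g}=0$ and $\jump{\na g}=0$ on every $e\in\mc{E}_h^i$. Consequently on an interior face shared by $K^+,K^-$, $\jump{g-\mc{R}g}=-\jump{\mc{R}g}$ can be written as
\[
\jump{g-\mc{R}g}=n^+(g-\mc{R}g)|_{K^+}+n^-(g-\mc{R}g)|_{K^-},
\]
and similarly for $\jump{\na(g-\mc{R}g)}$; on boundary faces only one trace is involved. Thus each face contribution is controlled by the element traces of $g-\mc{R}g$ and $\na(g-\mc{R}g)$ on $\pa K$.

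I would then supply the needed trace bound for $g-\mc{R}g$ itself, which is not stated in the preceding lemma. Combining the \emph{Agmon inequality} \eqref{eq:agmon} with \eqref{eq:reconSobleverror} for $q=0,1$ gives
\[
\nm{g-\mc{R}g}{L^2(\pa K)}\le C\Lam_m h_K^{s-1/2}\nm{g}{H^t(S(K),\MTh)},
\]
while \eqref{eq:recontraceinequality} with $q=1$ supplies the gradient trace at order $h_K^{s-3/2}$. Inserting these into the face sums and using $h_e\simeq h_K$ for faces of $K$ (from M1), I get
\[
\begin{aligned}
\sum_{e\in\mc{E}_h}h_e^{-3}\nm{\jump{g-\mc{R}g}}{L^2(e)}^2
&\le C\Lam_m^2 h^{2(s-2)}\sum_{K\in\MTh}\nm{g}{H^t(S(K),\MTh)}^2,\\
\sum_{e\in\mc{E}_h}h_e^{-1}\nm{\jump{\na(g-\mc{R}g)}}{L^2(e)}^2
&\le C\Lam_m^2 h^{2(s-2)}\sum_{K\in\MTh}\nm{g}{H^t(S(K),\MTh)}^2,
\end{aligned}
\]
and a second appeal to the bounded overlap of patches collapses the right-hand side to $C\Lam_m^2 h^{2(s-2)}\nm{g}{H^t(\Om,\MTh)}^2$. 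Taking the square root of the sum of the three bounds delivers \eqref{eq:interpolation}.

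The only nontrivial step is the face analysis: one must simultaneously exploit the continuity of $g$ and $\na g$ (so that only the approximation error, not $g$ itself, appears in the jumps) and pair it with the correct power of $h_e$ from the trace bound so that the prefactors cancel to leave $h^{s-2}$. The arithmetic $h_e^{-3}\cdot h_K^{2s-1}=h^{2s-4}$ for the zeroth-order jump and $h_e^{-1}\cdot h_K^{2s-3}=h^{2s-4}$ for the first-order jump are what make the energy norm scale correctly, and this is the point where the sharpness of the result, and of the weights in the IPDG energy norm, becomes visible.
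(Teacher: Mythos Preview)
Your proposal is correct and is exactly the argument the paper has in mind: the paper does not spell out a proof of this theorem at all, simply stating that it follows from the preceding lemma, and your write-up is the natural way to unpack that claim term by term. One minor quibble: the bounded overlap of the patches $S(K)$ is not a consequence of the mesh assumptions (A1)--(A2) but rather of the patch construction (each $S(K)$ contains a uniformly bounded number of elements, and by quasi-uniformity each element lies in boundedly many patches); otherwise everything is in order.
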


}
\section{Error Estimate for the Biharmonic Problem}\label{sec:ipdg}
Let us consider the biharmonic problem: find $u\in H^4(\Omega)$, such
that
\begin{equation}  \label{eq:biharmonic}
  \begin{cases}
    \Delta^2u=f,\quad &x\in\Omega,\\ u=g_D,\quad &
    x\in\partial\Omega,\\ \pa_n u=g_N,\quad & x\in\partial\Omega,
  \end{cases}
\end{equation}
where $\Delta^2u=\Delta(\Delta u)$, $n$ denotes the unit outward
normal vector to $\partial\Omega$, $f\in L^2(\Omega)$, $g_D$ and $g_N$
are assumed to be suitably smooth such that under proper conditions on
$\Omega$, the boundary value problem~\eqref{eq:biharmonic} has a
unique solution. We refer to~\cite{blum1980boundary} for precise
description of such results.

The IPDG method~\cite{georgoulis2008discontinuous} employs the
following bilinear form $B$: for any $v,w\in H^4(\Omega,\MTh)$,
  \begin{align*}
   B(v, w)&=\sum_{K\in\MTh}\int_K\Delta v\Delta
   w\dx+\sum_{e\in\mc{E}_h}\int_e (\jump{v}\cdot\{\nabla\Delta
   w\}+\jump{w}\cdot\{\nabla\Delta v\})\mathrm
   ds\\ &\quad-\sum_{e\in\mc{E}_h}\int_e(\{\Delta w\}\jump{\nabla
     v}+\{\Delta v\} \jump{\nabla
     w})\ds+\sum_{e\in\mc{E}_h}\int_e\Lr{\alpha\jump{v}\jump{w}+\beta\jump{\nabla
       v}\jump{\nabla w}}\ds.
  \end{align*}
The piecewise penalty parameters $\alpha$ and $\beta$ are nonnegative
and will be specified later on. The linear form $l$ is defined for all
$v\in H^4(\Omega,\MTh)$ as
\[
l(v)=\int_\Omega fv\dx+\sum_{e\in\mc{E}_h^b}\int_e\Lr{g_D[\pa_n\Delta
    v+\alpha v]+g_N[\beta\pa_nv-\Delta v]}\ds.
\]

The discretized problem is to find $u_h\in U_h$ such that for all
$v_h\in U_h$,
\begin{equation} \label{eq:disproblem}
 B(\mathcal Ru_h,\mathcal R v_h)=l(\mathcal R v_h).
\end{equation}

{ We begin the error estimate by defining the lifting operator
  $\mathcal L: H^4(\Omega,\mathcal T_h)\rightarrow P_h$
  \cite{georgoulis2008discontinuous}:
\begin{equation}
  \int_\Omega \mathcal L(w)r\dx=\int_{\mc E_h}(\lj w\rj\cdot\{ \nabla r\}
  -\{r\}\lj\nabla w\rj)\mathrm ds\quad\text{for all\;} r\in P_h,
  \label{eq:liftingoperator}
\end{equation}
where $P_h$ is defined by
\begin{displaymath}
  P_h\triangleq\big \{w\in L^2(\Omega)\ \big |\ w|_K\in\mathbb
  P_m(K),\ \forall K\in\mathcal T_h\big \}.
\end{displaymath}

The following lemma~\cite{georgoulis2008discontinuous} gives the
stability of the lifting operator.
\begin{lemma}
  For all $w\in H^4(\Omega,\mathcal T_h)$, the following estimate
  holds:
  \begin{displaymath}
    \|\mathcal L(w)\|_{L^2(\Omega)}^2\leq\|\sqrt\gamma\lj w\rj\|_{
      L^2(\mc E_h)}^2+\|\sqrt\delta\lj\nabla w\rj\|_{L^2(\mc E_h)}^2,
  \end{displaymath}
  for piecewise constants $\gamma, \delta$ that are defined for all
  $e\in\mc E_h$ by
  \begin{displaymath}
    \gamma|_e=\frac{C_\gamma}{h_e^3},\quad
    \delta|_e=\frac{C_\delta}{h_e}, \quad\text{for all\quad}
    e\in\mc E_h,
  \end{displaymath}
  with sufficiently large positive constants $C_\gamma$ and
  $C_\delta$.
  \label{le:liftingoperator}
\end{lemma}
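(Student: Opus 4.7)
The plan is to test the defining relation~\eqref{eq:liftingoperator} of the lifting operator against $r = \mc{L}(w)\in P_h$ itself, then control the resulting polynomial trace terms via the Agmon inequality~\eqref{eq:agmon} combined with the inverse inequality~\eqref{eq:inverse}, and finally absorb $\nm{\mc{L}(w)}{L^2(\Om)}$ into the left-hand side by choosing $C_\gamma$ and $C_\delta$ large enough. This is the standard lifting-stability argument, adapted to the polygonal setting through assumptions (M1)--(M4).

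First, substituting $r = \mc{L}(w)$ into~\eqref{eq:liftingoperator} gives
$$\nm{\mc{L}(w)}{L^2(\Om)}^2 = \int_{\mc{E}_h}\bigl(\jump{w}\cdot\{\na r\} - \{r\}\jump{\na w}\bigr)\ds.$$
A weighted Cauchy--Schwarz applied edge-by-edge, pairing $\jump{w}$ with the weight $\sqrt{\gamma}|_e$ and $\jump{\na w}$ with $\sqrt{\delta}|_e$, and then a discrete Cauchy--Schwarz across faces, separates the two target seminorms from the polynomial factors $\gamma|_e^{-1/2}\{\na r\}$ and $\delta|_e^{-1/2}\{r\}$. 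For the latter, since $r|_K\in\mb{P}_m(K)$, combining the Agmon inequality~\eqref{eq:agmon} with the inverse inequality~\eqref{eq:inverse} (applied to $r$, and then to $\na r$) produces the discrete trace estimates
$$\nm{r}{L^2(e)}^2 \le C h_e^{-1}\nm{r}{L^2(K)}^2, \qquad \nm{\na r}{L^2(e)}^2 \le C h_e^{-3}\nm{r}{L^2(K)}^2,$$
with $C$ depending on $m$, $N$, and $\sigma$ but not on $h$; boundary faces are handled identically with only one adjacent element.

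Inserting $\gamma|_e = C_\gamma/h_e^3$ and $\delta|_e = C_\delta/h_e$ and summing over all faces --- assumption (M1) ensures that each element contributes at most $O(1)$ faces --- the polynomial factors collapse to $C\,C_\gamma^{-1}\nm{r}{L^2(\Om)}^2$ and $C\,C_\delta^{-1}\nm{r}{L^2(\Om)}^2$ respectively. Combining these with the Cauchy--Schwarz bound and dividing through by $\nm{\mc{L}(w)}{L^2(\Om)}$ yields the stated estimate, provided $C\bigl(C_\gamma^{-1/2}+C_\delta^{-1/2}\bigr)\le 1$; this inequality defines the ``sufficiently large'' thresholds in the lemma. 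The main obstacle is a careful bookkeeping of the constants through the trace and inverse steps so that the absorbing factor truly depends only on $m$, $N$, and $\sigma$ and is thereby independent of $h$; beyond that, each ingredient is a routine invocation of the structural inequalities (M1)--(M4).
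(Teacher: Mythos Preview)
Your argument is the standard lifting-stability proof and is correct; the only cosmetic point is that after dividing and squaring you actually need $2C'/C_\gamma\le 1$ and $2C'/C_\delta\le 1$ separately (so that the cross term is controlled), rather than the single condition $C(C_\gamma^{-1/2}+C_\delta^{-1/2})\le 1$ that you wrote, but this is a bookkeeping detail that does not affect the conclusion.

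There is nothing to compare here: the paper does not prove this lemma at all. It simply quotes the result from~\cite{georgoulis2008discontinuous} and moves on to use it in Lemmas~\ref{th:bilinearboundedness} and~\ref{le:bilinearcoercivity}. Your write-up therefore supplies more than the paper does; the route you take (test with $r=\mc{L}(w)$, discrete trace/inverse estimates on $P_h$, absorb) is exactly the argument in the cited reference, so there is no methodological divergence to discuss.
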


With the definition of the DG energy norm and the Lemma
\ref{le:liftingoperator}, we show the bilinear form $ B(\cdot,\cdot)$
satisfies the boundedness and the stability condition.
\begin{lemma}
Let $\alpha,\beta>0$, there exists a positive constant $\Lambda$ which
is independent of mesh size $h$ such that for all $v, w\in
H^4(\Omega,\mathcal T_h)$, there holds
  \begin{equation}
    |B(v,w)|\leq \Lambda\|v\|_{h}\| w\|_{h}.\quad
  \label{eq:bilinearboundedness}
  \end{equation}
  \label{th:bilinearboundedness}
\end{lemma}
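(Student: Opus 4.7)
The plan is to rewrite the higher-order edge terms of $B(v,w)$ using the lifting operator $\mc{L}$ so that they become volume integrals controlled by the $\|\Delta\cdot\|$ piece of the energy norm, and then apply Cauchy--Schwarz termwise. First, I would observe that the four consistency edge contributions in $B$ group precisely in the pattern of the lifting identity~\eqref{eq:liftingoperator}: the pair $\jump{v}\cdot\{\nabla\Delta w\}-\{\Delta w\}\jump{\nabla v}$ sums to $\int_\Omega \mc{L}(v)\Delta w\,dx$, and the pair $\jump{w}\cdot\{\nabla\Delta v\}-\{\Delta v\}\jump{\nabla w}$ sums to $\int_\Omega \mc{L}(w)\Delta v\,dx$, since on each element $\Delta v, \Delta w$ are admissible test functions in $P_h$ (for the reconstructed discrete functions to which this bound is ultimately applied). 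This reduces $B$ to
\[
B(v,w)=\sum_{K\in\MTh}\int_K\Delta v\,\Delta w\,dx+\int_\Omega\mc{L}(v)\Delta w\,dx+\int_\Omega\mc{L}(w)\Delta v\,dx+\int_{\mc{E}_h}\bigl(\alpha\jump{v}\jump{w}+\beta\jump{\nabla v}\jump{\nabla w}\bigr)\,ds.
\]

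Next, I would apply Cauchy--Schwarz to each of the four pieces. The volume term is immediate: $|\sum_K\int_K\Delta v\,\Delta w|\le \|v\|_h\|w\|_h$. For each lifting contribution I would estimate $|\int_\Omega\mc{L}(v)\Delta w\,dx|\le\|\mc{L}(v)\|_{L^2(\Omega)}\|\Delta w\|_{L^2(\Omega)}$ and then invoke Lemma~\ref{le:liftingoperator} to get
\[
\|\mc{L}(v)\|_{L^2(\Omega)}^2\le \|\sqrt{\gamma}\jump{v}\|_{L^2(\mc{E}_h)}^2+\|\sqrt{\delta}\jump{\nabla v}\|_{L^2(\mc{E}_h)}^2,
\]
where, since $\gamma|_e=C_\gamma h_e^{-3}$ and $\delta|_e=C_\delta h_e^{-1}$, the right-hand side is controlled exactly by the two jump seminorms appearing in $\|v\|_h$. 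The penalty terms split as $\int_e\alpha\jump{v}\jump{w}=\int_e(\alpha^{1/2}\jump{v})(\alpha^{1/2}\jump{w})$ and are bounded by $\|v\|_h\|w\|_h$ as soon as $\alpha|_e\lesssim h_e^{-3}$ and $\beta|_e\lesssim h_e^{-1}$, which is the standard IPDG scaling. Collecting the four bounds yields \eqref{eq:bilinearboundedness} with a constant $\Lambda$ depending only on $C_\gamma,C_\delta$ and the prescribed penalty parameters, hence independent of $h$.

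The main obstacle will be the technical point that the lifting identity~\eqref{eq:liftingoperator} is defined by duality against $r\in P_h$, whereas the lemma is stated for general $v,w\in H^4(\Omega,\MTh)$ for which $\Delta v,\Delta w$ need not lie in $P_h$. To cover this case cleanly I would either restrict attention to the reconstructed polynomial space $V_h=\mc{R}U_h$ (the only setting in which the lemma is later applied in~\eqref{eq:disproblem}), or insert an auxiliary piecewise $L^2$-projection of $\Delta v,\Delta w$ onto $P_h$ and absorb the projection error into the $\|\Delta\cdot\|$-part of the energy norm by using the inverse inequality~\eqref{eq:inverse} together with Agmon's inequality~\eqref{eq:agmon}. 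Beyond this, the argument is a sequence of Cauchy--Schwarz estimates, and no further machinery is required.
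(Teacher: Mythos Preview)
Your proposal is essentially identical to the paper's proof: rewrite $B$ via the lifting operator as in~\eqref{eq:bilinearlifting}, then bound each term by Cauchy--Schwarz together with Lemma~\ref{le:liftingoperator}. You have, in addition, spotted a genuine subtlety that the paper silently passes over: the lifting identity~\eqref{eq:liftingoperator} is stated only for $r\in P_h$, whereas Lemma~\ref{th:bilinearboundedness} is asserted for all $v,w\in H^4(\Omega,\mc T_h)$; your suggested remedies (restrict to $V_h$, or project $\Delta v,\Delta w$ onto $P_h$) are the standard ways to close this gap.
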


\begin{proof}
  By the lifting operator \eqref{eq:liftingoperator}, the bilinear
  form $B$ may be written as
  \begin{equation}
    \begin{aligned}
      B(v,w)=\int_{\Omega}\big (\Delta v\Delta w& + \mathcal
      L(v)\Delta w+\Delta v\mathcal L(w)\big )\dx\\ &+\int_{\mc E_h}\big
      (\alpha\lj v\rj\lj w\rj+\beta \lj\nabla v\rj\lj \nabla w\rj\big
      )\mathrm ds.\\
    \end{aligned}
    \label{eq:bilinearlifting}
  \end{equation}
  Therefore, we obtain
  \begin{displaymath}
    \begin{aligned}
      |B(v, w)|\leq&\|\Delta v\|_{L^2(\Omega)}\|\Delta
      w\|_{L^2(\Omega)}+\|\mathcal L(v)\|_{L^2(\Omega)}\|\Delta
      w\|_{L^2(\Omega)}\\ &+\|\Delta v\|_{L^2(\Omega)}\|\mathcal
      L(w)\|_{L^2(\Omega)} +\|\sqrt\alpha\lj
      v\rj\|_{L^2(\Omega)}\|\sqrt\alpha\lj
      w\rj\|_{L^2(\mc E_h)}\\ &+\|\sqrt\beta \lj \nabla v
      \rj\|_{L^2(\mc E_h)} \| \sqrt\beta \lj \nabla w\rj
      \|_{L^2(\mc E_h)}\\ \leq& \Lambda\| v \|_{h} \| w \|_{h}.
    \end{aligned}
  \end{displaymath}
\end{proof}
\begin{lemma}
  Let
 \begin{equation}\label{eq:parameter}
\alpha | _e=\mu/h_e^3\quad\text{and\quad} \beta |_e=\eta/h_e
\end{equation}
  on $e\in\mc E_h$, where $\mu$ and $\eta$ are positive constants. With
  sufficiently large $\mu$ and $\eta$, there exists a positive
  constant $\lambda$ that is independent of mesh size $h$ such that
  for all $v_h\in U_h$
  \begin{equation}
    B(\mc{R}v_h,\mc{R}v_h)\geq \lambda\|\mc{R}v_h\|_{h}^2.
    \label{eq:bilinearcoercivity}
  \end{equation}
  \label{le:bilinearcoercivity}
\end{lemma}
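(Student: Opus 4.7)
The plan is to exploit the lifting-operator rewriting of $B$ that has already been derived in \eqref{eq:bilinearlifting}. Setting $v=w=\mc{R}v_h$ there, I obtain
\begin{equation*}
  B(\mc{R}v_h,\mc{R}v_h)=\|\Delta\mc{R}v_h\|_{L^2(\Om)}^2+2\int_\Om\mc{L}(\mc{R}v_h)\,\Delta\mc{R}v_h\dx
  +\|\sqrt{\alpha}\jump{\mc{R}v_h}\|_{L^2(\mc{E}_h)}^2+\|\sqrt{\beta}\jump{\na\mc{R}v_h}\|_{L^2(\mc{E}_h)}^2.
\end{equation*}
With the choice \eqref{eq:parameter}, the last two terms are exactly $\mu\|h_e^{-3/2}\jump{\mc{R}v_h}\|_{L^2(\mc{E}_h)}^2$ and $\eta\|h_e^{-1/2}\jump{\na\mc{R}v_h}\|_{L^2(\mc{E}_h)}^2$, i.e.\ the jump portions of $\|\cdot\|_h^2$ up to the multiplicative constants $\mu,\eta$. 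The only obstruction to coercivity is therefore the indefinite cross term involving the lifting.

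Next I would bound that cross term by Young's inequality: for any $\epsilon\in(0,1)$,
\begin{equation*}
  \Bigl|\,2\int_\Om\mc{L}(\mc{R}v_h)\Delta\mc{R}v_h\dx\,\Bigr|\le\epsilon\|\Delta\mc{R}v_h\|_{L^2(\Om)}^2+\frac{1}{\epsilon}\|\mc{L}(\mc{R}v_h)\|_{L^2(\Om)}^2.
\end{equation*}
Then Lemma~\ref{le:liftingoperator} gives
\begin{equation*}
  \|\mc{L}(\mc{R}v_h)\|_{L^2(\Om)}^2\le C_\gamma\|h_e^{-3/2}\jump{\mc{R}v_h}\|_{L^2(\mc{E}_h)}^2+C_\delta\|h_e^{-1/2}\jump{\na\mc{R}v_h}\|_{L^2(\mc{E}_h)}^2.
\end{equation*}
Combining these two inequalities produces the lower bound
\begin{equation*}
  B(\mc{R}v_h,\mc{R}v_h)\ge(1-\epsilon)\|\Delta\mc{R}v_h\|_{L^2(\Om)}^2+\Bigl(\mu-\tfrac{C_\gamma}{\epsilon}\Bigr)\|h_e^{-3/2}\jump{\mc{R}v_h}\|_{L^2(\mc{E}_h)}^2+\Bigl(\eta-\tfrac{C_\delta}{\epsilon}\Bigr)\|h_e^{-1/2}\jump{\na\mc{R}v_h}\|_{L^2(\mc{E}_h)}^2.
\end{equation*}

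To finish, I fix any $\epsilon\in(0,1)$ (say $\epsilon=1/2$) and then require $\mu$ and $\eta$ to be large enough that $\mu-C_\gamma/\epsilon$ and $\eta-C_\delta/\epsilon$ are strictly positive. Taking $\lambda$ to be the minimum of $1-\epsilon$, $\mu-C_\gamma/\epsilon$ and $\eta-C_\delta/\epsilon$ (all divided by the appropriate constants to match the definition of $\|\cdot\|_h^2$) yields the desired coercivity $B(\mc{R}v_h,\mc{R}v_h)\ge\lambda\|\mc{R}v_h\|_h^2$.

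The step that really does the work is the absorption of the lifting cross term: everything hinges on the $L^2$-stability estimate of $\mc{L}$ provided by Lemma~\ref{le:liftingoperator}, which converts the lifting norm into the two jump penalty terms of $\|\cdot\|_h$ at exactly the right powers of $h_e$. One should note that the argument is purely norm-based and makes no use of the structure of $U_h$ or of $\mc{R}$; it is in fact valid for every $w\in H^4(\Om,\MTh)$, and the restriction to $\mc{R}v_h$ is only relevant in later applications where we need $\mc{R}v_h\in V_h$.
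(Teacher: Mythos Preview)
Your proof is correct and follows essentially the same route as the paper: both rewrite $B$ via the lifting operator, bound the cross term $2\int_\Om\mc{L}(\mc{R}v_h)\Delta\mc{R}v_h\dx$ with Young's inequality, and then invoke Lemma~\ref{le:liftingoperator} to absorb $\|\mc{L}(\mc{R}v_h)\|_{L^2(\Om)}^2$ into the jump penalties. The paper simply fixes the Young parameter from the outset (obtaining the concrete thresholds $\alpha>4\gamma$, $\beta>4\delta$), whereas you keep $\epsilon$ free and specialize at the end; the arguments are otherwise identical.

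One small caveat on your closing remark: the identity~\eqref{eq:bilinearlifting} rests on the defining property~\eqref{eq:liftingoperator} of $\mc{L}$, which is only tested against $r\in P_h$. The rewriting therefore uses that $\Delta(\mc{R}v_h)$ is a piecewise polynomial, so the restriction to $\mc{R}v_h$ (or more generally to $P_h$) is not entirely cosmetic.
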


\begin{proof}
By the definition, we write
  \begin{displaymath}
    \begin{aligned}
      B(\mc{R}v_h,\mc{R}v_h)&=\|\Delta
      \mc{R}v_h\|_{L^2(\Omega)}^2+\|\sqrt\alpha \lj \mc{R}v_h
      \rj\|_{L^2(\mc E_h)}^2+\|\sqrt\beta \lj \nabla \mc{R}v_h \rj\|_{
        L^2(\mc E_h)}^2\\ &\hspace{100pt}+2\int_\Omega\mathcal
      L(\mc{R}v_h)\Delta \mc{R}v_h\dx.\\
    \end{aligned}
  \end{displaymath}
  Using the inequality
  \begin{displaymath}
    -2\int_\Omega\mathcal L(\mc{R}v_h)\Delta \mc{R}v_h\dx\leq
    2\|\mathcal L(\mc{R}v_h)\|_{ L^2(\Omega)}^2 + \frac12\| \Delta
    \mc{R}v_h \|_{L^2(\Omega)}^2,
  \end{displaymath}
and Lemma~\ref{le:liftingoperator}, we obtain
  \begin{displaymath}
    B(\mc{R}v_h, \mc{R}v_h)\geq\frac12\| \Delta \mc{R}v_h \|_{L^2(
      \Omega)}^2 + \|\sqrt{\alpha - 4\gamma}\lj \mc{R}v_h \rj
      \|_{L^2(\mc{E}_h)}^2 + \| \sqrt{ \beta - 4\delta}\lj \nabla
    \mc{R}v_h \rj\|_{L^2(\mc E_h)}^2,
  \end{displaymath}
  hence, the coercivity follows if $\alpha>4\gamma$ and
  $\beta>4\delta$.
\end{proof}

We are ready to prove a priori error estimates for
Problem~\eqref{eq:disproblem}.
\begin{theorem}\label{th:errorbound}
Let $u_h$ be the solution of Problem~\eqref{eq:disproblem} and let the
exact solution $u$ to the Problem~\eqref{eq:biharmonic} belong to the
broken Sobolev space $H^t(\Omega,\mathcal T_h)$ with $t\ge
4$. Furthermore, let the penalty functions $\alpha$ and $\beta$
satisfy the condition~\eqref{eq:parameter} in
Lemma~\ref{le:bilinearcoercivity}. Then
\begin{equation}\label{eq:DGerror}
    \nm{u-\mathcal R u_h}{h}\le Ch^{s-2}\nm{u}{H^t(\Om,\MTh)},
\end{equation}
where $s=\min(m+1,t)$ and $s\ge 3$.
\end{theorem}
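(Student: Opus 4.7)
The plan is to perform a classical C\'ea-type error analysis for the DG discretization: combine consistency of the scheme with the boundedness (Lemma~\ref{th:bilinearboundedness}) and coercivity (Lemma~\ref{le:bilinearcoercivity}) of $B$ to reduce $\|u-\mathcal R u_h\|_h$ to an interpolation error, which is then controlled by Theorem~\ref{th:interpolationerrorDG}.

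The first and most delicate step is consistency: $B(u,w)=l(w)$ for every $w\in H^4(\Omega,\MTh)$. Since $u\in H^t(\Omega,\MTh)$ with $t\ge 4$ solves~\eqref{eq:biharmonic} strongly, $u\in H^2(\Omega)$, and therefore $\jump{u}=0$ and $\jump{\nabla u}=0$ on every interior face, while on $\mathcal E_h^b$ these jumps reduce to $n g_D$ and $g_N$ respectively. Multiplying $\Delta^2 u=f$ by $w$, summing over elements, and integrating by parts twice produces face integrals which, via the DG product identity $\sum_K\int_{\pa K}\phi q\,\ds=\sum_e\int_e(\{\phi\}\jump{q}+\jump{\phi}\cdot\{q\})\,\ds$, reproduce the $\{\nabla\Delta u\}\cdot\jump{w}$ and $\{\Delta u\}\jump{\nabla w}$ terms in $B$, while the penalty contributions on $\mathcal E_h^b$ combine with the Dirichlet/Neumann data to reassemble precisely the boundary part of $l$. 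Subtracting~\eqref{eq:disproblem} then yields the Galerkin orthogonality $B(u-\mathcal R u_h,\mathcal R v_h)=0$ for all $v_h\in U_h$.

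Next I would split the error through the interpolant $I_h u\in U_h$ defined by $(I_h u)(x_K)=u(x_K)$, which is meaningful because $d\le 3$ and $t\ge 4$ give $u\in C^0(\overline\Omega)$ by Sobolev embedding; by~\eqref{eq:interpexpression} this satisfies $\mathcal R(I_h u)=\mathcal R u$. Set $\xi:=\mathcal R u-\mathcal R u_h=\mathcal R(I_h u-u_h)\in V_h$. Coercivity gives
\[
\lambda\|\xi\|_h^2\le B(\xi,\xi)=B(\mathcal R u-u,\xi)+B(u-\mathcal R u_h,\xi);
\]
the second term vanishes by Galerkin orthogonality with test $v_h=I_h u-u_h$, and boundedness bounds the first by $\Lambda\|u-\mathcal R u\|_h\|\xi\|_h$. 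Hence $\|\xi\|_h\le(\Lambda/\lambda)\|u-\mathcal R u\|_h$, the triangle inequality gives $\|u-\mathcal R u_h\|_h\le(1+\Lambda/\lambda)\|u-\mathcal R u\|_h$, and Theorem~\ref{th:interpolationerrorDG} delivers the claimed rate $Ch^{s-2}\|u\|_{H^t(\Omega,\MTh)}$ with $s=\min(m+1,t)$.

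The main obstacle is the consistency calculation itself: it is not deep but is tedious, requiring careful bookkeeping of which boundary terms survive each integration by parts, correct handling of the vector-valued jump/average conventions, and verification that the two symmetrization terms and the two penalty terms on $\mathcal E_h^b$ reassemble exactly into the four data integrands of $l$. Once this is in place, the rest is a mechanical application of Lemmas~\ref{th:bilinearboundedness}--\ref{le:bilinearcoercivity} and Theorem~\ref{th:interpolationerrorDG}, and the side condition $s\ge 3$ reflects only that $m\ge 2$ is the minimal polynomial degree needed for a fourth-order problem.
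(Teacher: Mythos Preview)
Your proposal is correct and follows essentially the same C\'ea-type route as the paper: Galerkin orthogonality, then coercivity plus boundedness on $\xi=\mathcal R u-\mathcal R u_h$ to reduce to the interpolation error, then Theorem~\ref{th:interpolationerrorDG}. Your explicit justification of consistency and your construction of $I_h u\in U_h$ with $\mathcal R(I_h u)=\mathcal R u$ (so that $\xi$ is genuinely of the form $\mathcal R v_h$ and Lemma~\ref{le:bilinearcoercivity} applies) are details the paper leaves implicit but which are appropriate to spell out.
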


\begin{proof}
We begin with the Galerkin orthogonality: for all $v_h\in U_h$, there
holds
\begin{equation} \label{eq:orthogonality}
 B(u-\mc{R}u_h,\mc{R}v_h)=0.
\end{equation}
Denote $w=\mc{R} u-\mc{R} u_h$, and using the above Galerkin
orthogonality, we obtain
\[
B(w,w)=B(\mc{R}u-u,w)+B(u-\mc{R}u_h,w)=B(\mc{R}u-u,w).
\]
Using~\eqref{eq:bilinearcoercivity}
and~\eqref{eq:bilinearboundedness}, we obtain
\[
\nm{\mc{R}u-\mc{R}u_h}{h}\le\dfrac{\Lambda}{\lambda}\nm{u-\mc{R}u}{h}.
\]
By the triangle inequality we immediately obtain
\[
\nm{u-\mc{R}u_h}{h}\le(1+\Lambda/\lambda)\nm{u-\mc{R}u}{h},
\]
which together with~\eqref{eq:interpolation} gives the
estimate~\eqref{eq:DGerror}.
\end{proof}

The $L^2$ error estimate may be obtained by standard duality
argument. Following~\cite{mozolevski2007hp}, we make the regularity
assumptions: the solution $\psi$ of problem
\begin{equation}
\left\{
  \begin{aligned}
    \Delta^2\psi&=u-\mc{R}
    u_h,\quad&&\text{in}\quad\Omega,\\ \psi=\dfrac{\pa\psi}{\pa
      n}&=0,\quad&&\text{on\quad}\partial\Omega,
  \end{aligned}\right.
  \label{eq:dualproblem}
\end{equation}
belongs to $H^4(\Omega)$, and there exists a positive constant $C$
that only depends on $\Omega$ such that
\begin{equation} \label{eq:ellpticregularity}
  \|\psi\|_{H^4(\Omega)}\leq C\|u-\mc{R} u_h\|_{L^2(\Omega)}.
\end{equation}

\begin{theorem}
Besides the conditions in Theorem~\ref{th:errorbound}, we assume the
elliptic regularity~\eqref{eq:ellpticregularity} holds true, then
  \begin{align}
    \|u-\mc Ru_{h}\|_{L^2(\Omega)}&\leq Ch^2\|u\|_{H^t(\Omega,\mathcal
      T_h)} ,\quad m=2, \label{eq:L2errorm=2}\\ \|u-\mc
    Ru_{h}\|_{L^2(\Omega)}&\leq Ch^{s}\|u\|_{H^t(\Omega,\mathcal
      T_h)},\quad m\geq3,\label{eq:L2errorm=3}
  \end{align}
  where $s=\min(m+1,t)$.
\end{theorem}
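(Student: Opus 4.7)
The plan is a standard Aubin--Nitsche duality argument adapted to the IPDG setting. First, I would test the dual problem~\eqref{eq:dualproblem} against $u-\mc{R}u_h$: integrating $\Delta^2\psi\cdot(u-\mc{R}u_h)$ over $\Om$, performing elementwise integration by parts on $\int_K\Delta\psi\,\Delta w\,\dx$, and recombining the $\sum_K\int_{\pa K}$ contributions as face integrals yields the identity
\[
\nm{u-\mc{R}u_h}{L^2(\Om)}^2 = B(\psi,u-\mc{R}u_h).
\]
Here the key point is that $\psi\in H^4(\Om)\cap H^2_0(\Om)$ is conforming, so $\jump{\psi}=\jump{\na\psi}=0$ on every face in $\mc{E}_h$. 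Consequently, all the penalty and consistency terms in the definition of $B$ that carry a jump of $\psi$ drop out, and the surviving face contributions are exactly $\int_e\jump{w}\cdot\{\na\Delta\psi\} - \int_e\{\Delta\psi\}\jump{\na w}$, which match those produced by the integration by parts.

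Second, since $B$ is symmetric, the Galerkin orthogonality~\eqref{eq:orthogonality} gives $B(\mc{R}v_h,u-\mc{R}u_h)=0$ for every $v_h\in U_h$. Subtracting this and applying the boundedness~\eqref{eq:bilinearboundedness} yields
\[
\nm{u-\mc{R}u_h}{L^2(\Om)}^2 = B(\psi-\mc{R}v_h,u-\mc{R}u_h) \le \Lambda\,\nm{\psi-\mc{R}v_h}{h}\,\nm{u-\mc{R}u_h}{h}.
\]
I would then choose $v_h\in U_h$ so that $\mc{R}v_h$ is the reconstruction approximation of $\psi$ and invoke Theorem~\ref{th:interpolationerrorDG} with regularity index $t=4$, which gives $\nm{\psi-\mc{R}v_h}{h}\le C h^{\min(m+1,4)-2}\nm{\psi}{H^4(\Om)}$. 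The exponent equals $1$ when $m=2$ and $2$ when $m\ge 3$; this saturation at $h^2$ is due to the fact that the dual solution is only guaranteed to sit in $H^4$, and it is precisely what forces the two separate cases in the statement.

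Finally, I would combine the above with the energy estimate $\nm{u-\mc{R}u_h}{h}\le C h^{s-2}\nm{u}{H^t(\Om,\MTh)}$ from Theorem~\ref{th:errorbound} and the elliptic regularity~\eqref{eq:ellpticregularity}, $\nm{\psi}{H^4(\Om)}\le C\nm{u-\mc{R}u_h}{L^2(\Om)}$. Cancelling one factor of $\nm{u-\mc{R}u_h}{L^2(\Om)}$ from both sides of the resulting chain leaves the rate $h^{(\min(m+1,4)-2)+(s-2)}\nm{u}{H^t(\Om,\MTh)}$: for $m=2$ this evaluates to $h^{1+1}=h^2$, since $s=\min(3,t)=3$ when $t\ge 4$, establishing~\eqref{eq:L2errorm=2}; for $m\ge 3$ it becomes $h^{2+(s-2)}=h^s$, establishing~\eqref{eq:L2errorm=3}.

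The main obstacle is the opening identity $\nm{u-\mc{R}u_h}{L^2(\Om)}^2 = B(\psi,u-\mc{R}u_h)$. Although conceptually standard, it demands careful bookkeeping of the elementwise integration by parts, correct rewriting of $\sum_K\int_{\pa K}$ in terms of averages and jumps over $\mc{E}_h^i$ and $\mc{E}_h^b$, and verification that every potentially spurious term in the expanded $B(\psi,w)$ is annihilated by the conformity conditions $\jump{\psi}=\jump{\na\psi}=0$ on $\mc{E}_h^i$ together with $\psi=\pa_n\psi=0$ on $\pa\Om$.
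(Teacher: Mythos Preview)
Your proposal is correct and follows essentially the same duality argument as the paper: the paper's proof also opens with the identity $\|u-\mc{R}u_h\|_{L^2(\Om)}^2=B(\psi,u-\mc{R}u_h)=B(\psi-\mc{R}\psi,u-\mc{R}u_h)$ via integration by parts and Galerkin orthogonality, then invokes the interpolation estimate, the energy estimate~\eqref{eq:DGerror}, and the regularity~\eqref{eq:ellpticregularity}. Your write-up simply spells out more of the bookkeeping (the conformity of $\psi$ killing the jump terms, and the explicit case split $m=2$ versus $m\ge 3$) that the paper leaves implicit.
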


\begin{proof}
An integration by parts gives
\[
\|u-\mc{R}u_h\|_{L^2(\Om)}^2=B(\psi,u - \mc{R}u_h)=B(\psi-\mc{R}\psi,
u-\mc{R}u_h),
\]
where we have used the Galerkin orthogonality in the last step.
Combining the interpolation estimate, the energy
estimate~\eqref{eq:DGerror} and the regularity
estimate~\eqref{eq:ellpticregularity} gives~\eqref{eq:L2errorm=2}
and~\eqref{eq:L2errorm=3}.
\end{proof}
}

\section{Implementation and Numerical Results}
\label{sec:numericalresults}
{ In this section, we describe the implementation details of the
reconstructed space and report some numerical examples to show the
accuracy and performance of the proposed method. In all examples, we
build element patches by the first method and use a sparse direct
solver for the resulting sparse linear systems.
\subsection{Implementation}
The key point of the implementation is to calculate the basis
functions and here we present a one-dimensional example on the
interval $[0, 1]$. Consider a uniform mesh consisting of $5$ elements
$\{K_1, K_2, K_3, K_4, K_5\}$; see Figure~\ref{fig:1d_mesh}.
\begin{figure}
  \centering \includegraphics[width=10cm]{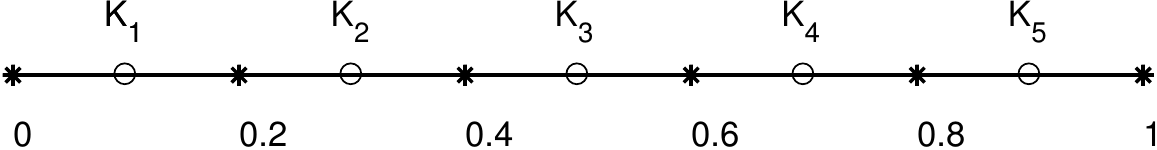}
  \caption{The uniform mesh on $[0,1]$.}
  \label{fig:1d_mesh}
\end{figure}
We choose the midpoint of each element as the collocation point to
reconstruct a piecewise linear space.  The element patches are taken
as
\begin{align*}
S(K_1)&=\{K_1,K_2,K_3\},\quad
S(K_i)=\{K_{i-1},K_i,K_{i+1}\},\ i=2,3,4,\\ S(K_5)&=\{K_3,K_4,K_5\}.
\end{align*}

The local least-squares~\eqref{eq:lsproblem} on element $K_i$ is
\begin{equation*}
  \mc R_{K_i} g= \argmin_{\{a,b\}\in\mb{R}} \ \sum_{x_{K'}\in
    \mc{I}_{K_i}}\abs{g(x_{K'})-(ax_{K'}+b)}^2.
\end{equation*}
A direct calculation gives
\begin{displaymath}
  [a, b]^T = (A^T A)^{-1} A^T q,
\end{displaymath}
where $A$ and $q$ are given as follows. For $i=1$,
\[
A = \begin{bmatrix} 1 & x_{K_1} \\ 1 & x_{K_2} \\ 1 & x_{K_3}
  \end{bmatrix} ,\quad
  q = \begin{bmatrix} g(x_{K_1}) \\ g(x_{K_2}) \\ g(x_{K_3})
  \end{bmatrix} ,
\]
and for $i=2,3,4$,
\[
  A = \begin{bmatrix} 1 & x_{K_{i-1}} \\ 1 & x_{K_i} \\ 1 &
    x_{K_{i+1}}
    \end{bmatrix},\quad
    q = \begin{bmatrix} g(x_{K_{i-1}}) \\ g(x_{K_i}) \\ g(x_{K_{i+1}})
  \end{bmatrix} ,
  \]
 and for $i=5$,
\[
  A = \begin{bmatrix} 1 & x_{K_3} \\ 1 & x_{K_4} \\ 1 & x_{K_5}
  \end{bmatrix} \quad
    q = \begin{bmatrix} g(x_{K_3}) \\ g(x_{K_4}) \\ g(x_{K_5})
  \end{bmatrix} .
  \]
The basis functions $\{ \lambda_K\}$ are plot in
Figure~\ref{fig:1d_basis}.
\begin{figure}
  \centering
  \begin{tabular}{c}
    \includegraphics[scale = 1]{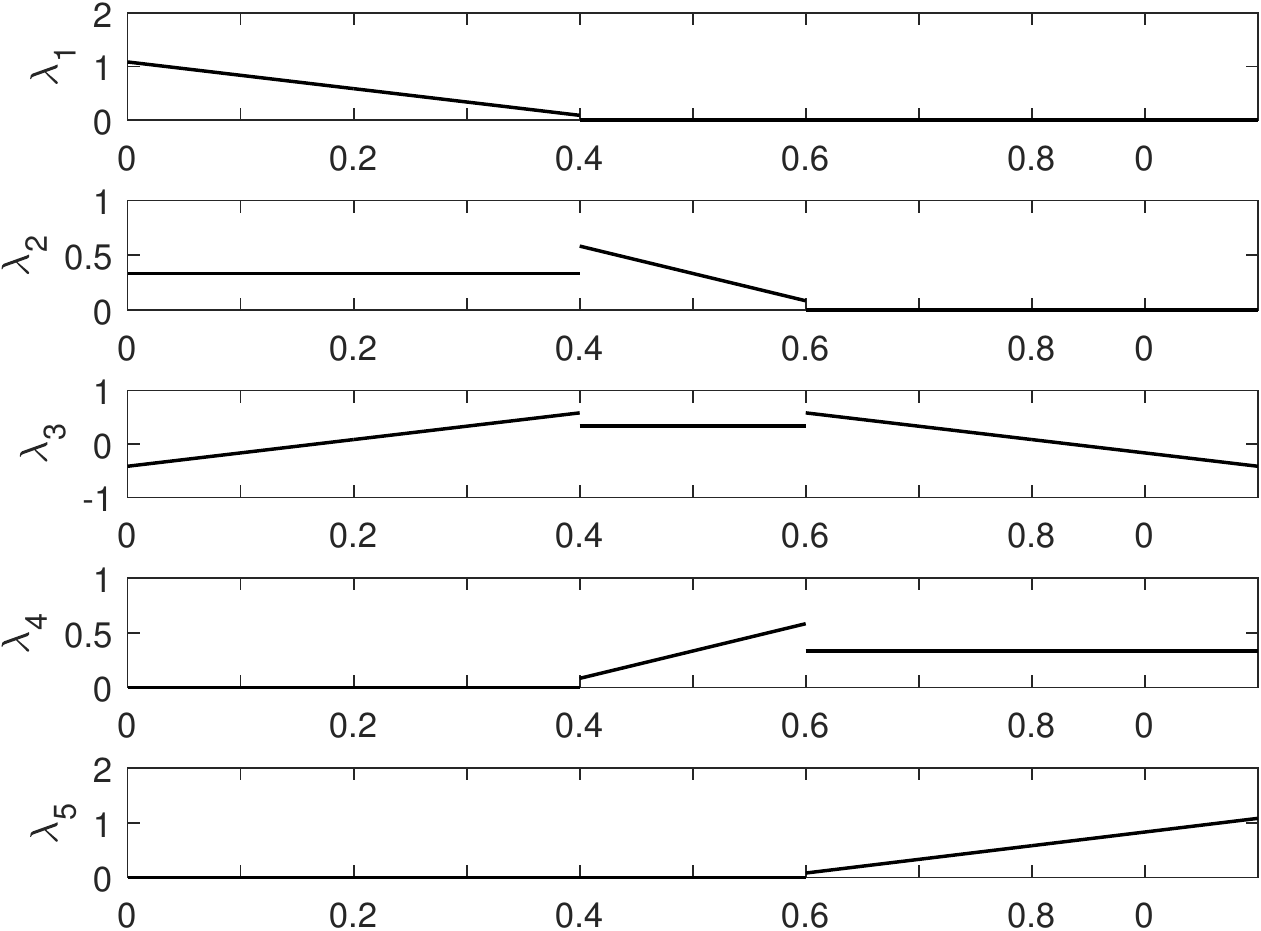}
  \end{tabular}
  \caption{The basis functions.}
  \label{fig:1d_basis}
\end{figure}
Thus we store $(A^TA)^{-1}A^T$ for each element to represent the basis
functions and it is the same when we deal with the high
dimensional problem.
}
\subsection{2D Smooth Solutions}
We firstly study the convergence rate for smooth solutions of
two-dimensional problems.

\begin{table}[htp]
  \centering
  \caption{uniform $\# S(K)$ for 2D smooth solutions}
  \label{tab:patchnumber2d}
  \scalebox{1.00}{
  \begin{tabular}{|l|l|l|l|l|l|l|}
    \hline \multicolumn{2}{|l|}{polynomial degree $m$}& 2 & 3 & 4 & 5
    & 6\\ \hline \multirow{3}{*}{$\# S(K)$} & Example 1 & 9 & 15 & 22
    & 29 & 38\\ \cline{2-7} & Example 2 & 9 & 16 & 23 & 32 &
    45\\ \cline{2-7} & Example 3 & 9 & 20 & 28 & 38 & 49\\ \hline
  \end{tabular}
  }
\end{table}
\textbf{Example 1.} Consider the biharmonic problem on the domain
$\Omega=(0,1)^2$ with Dirichlet boundary condition. The exact solution
is taken as
\begin{equation}
  u(x,y)=\sin^2(\pi x)\sin^2(\pi y),\quad (x,y)\in\Omega,
  \label{eq:ex1}
\end{equation}
and $g_D$, $g_N$ and the source term $f$ are chosen accordingly. The
polynomial degree $m$ is taken by $m=2,\cdots,6$. And the domain
$\Omega$ is partitioned into several regular disjoint elements for
each mesh size $h$; see Figure~\ref{fig:triangulation}, where $h=1/10,
1/20,1/40$ and $h=1/80$. We choose $\# S(K)$ uniformly for all
elements as in the second row of Table~\ref{tab:patchnumber2d}.

In Figure~\ref{fig:test1error}, we present the errors measured in both
the DG norm and the $L^2$ norm. It is clear that the convergence rate
in the DG norm is $m-1$ for fixed $m$. The convergence rate in the
L$^2$ norm is $m+1$ for $m\geq3$, which converges quadratically when
$m=2$. Such convergence rate is consistent with the theoretical
prediction in Theorem~\ref{th:errorbound}.
\begin{figure}[!htp]
  \centering
  \includegraphics[width=0.4\textwidth]{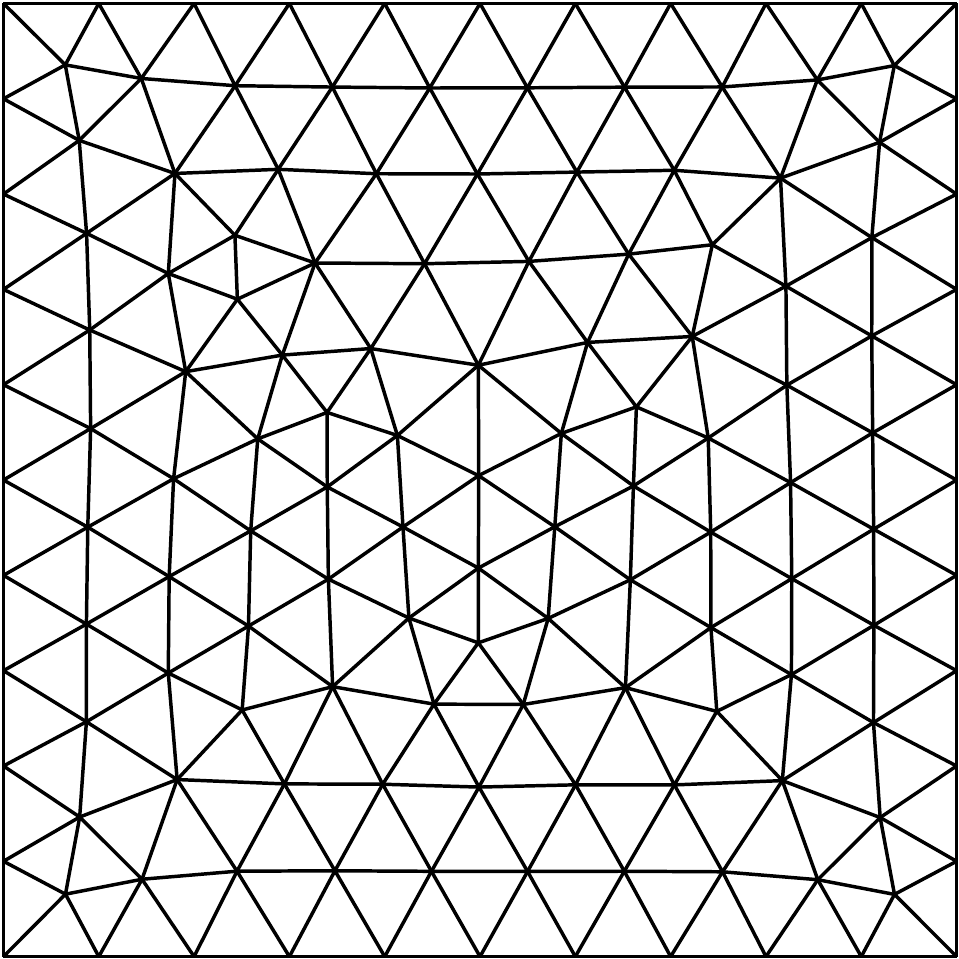}
  \hspace{25pt}
  \includegraphics[width=0.4\textwidth]{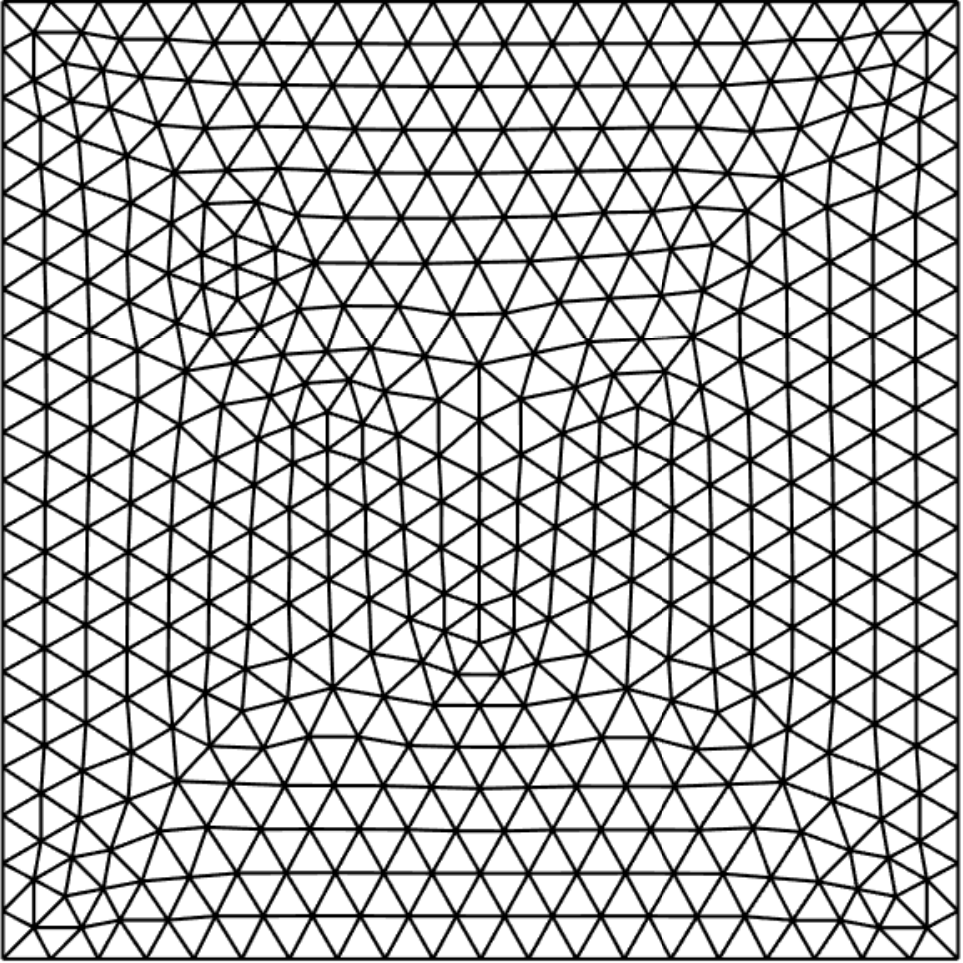}
  \caption{The triangular meshes for Example 1.}
  \label{fig:triangulation}
\end{figure}

\begin{figure}
  \centering
  \includegraphics[width=0.48\textwidth]{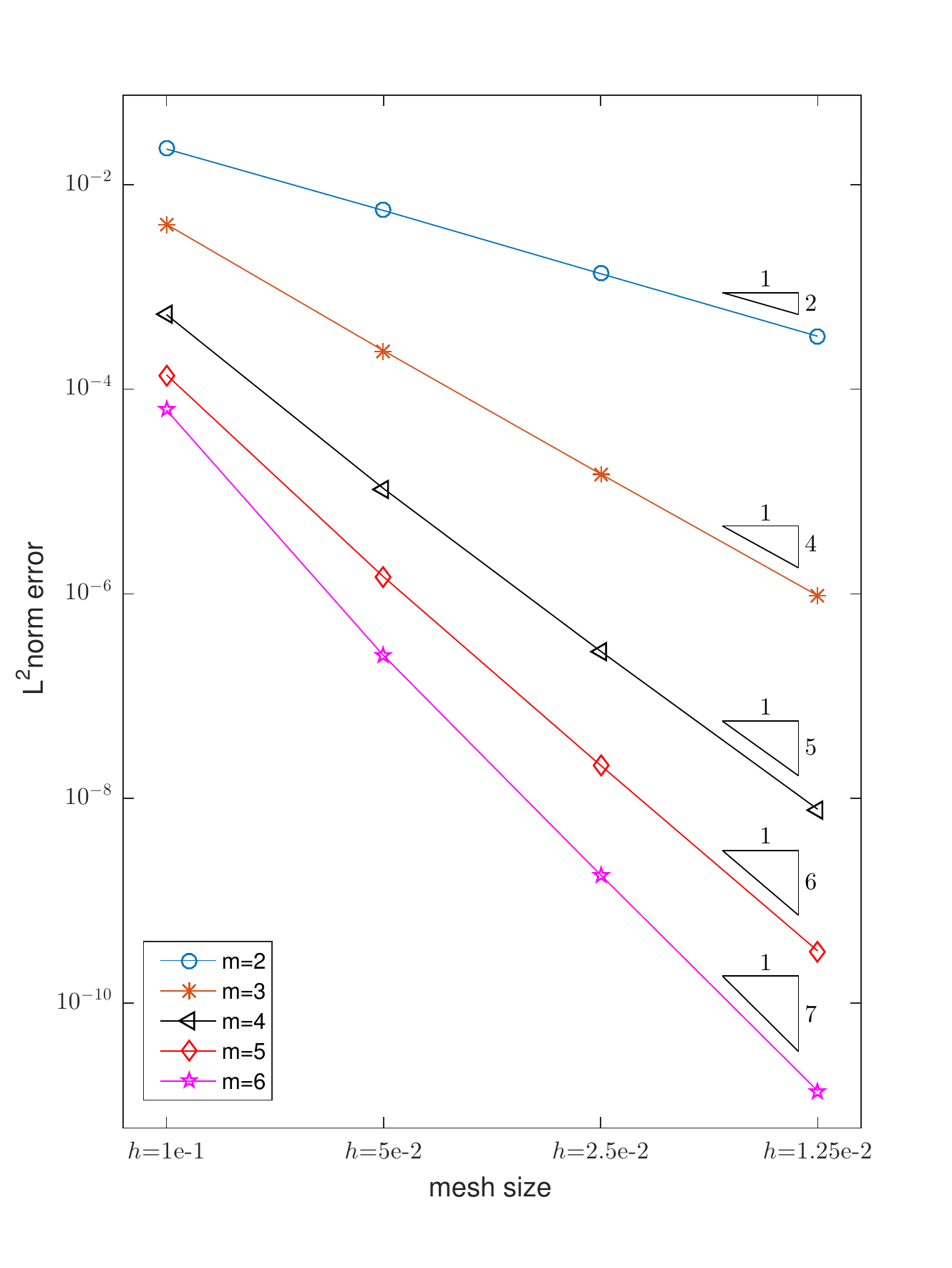}
  \includegraphics[width=0.48\textwidth]{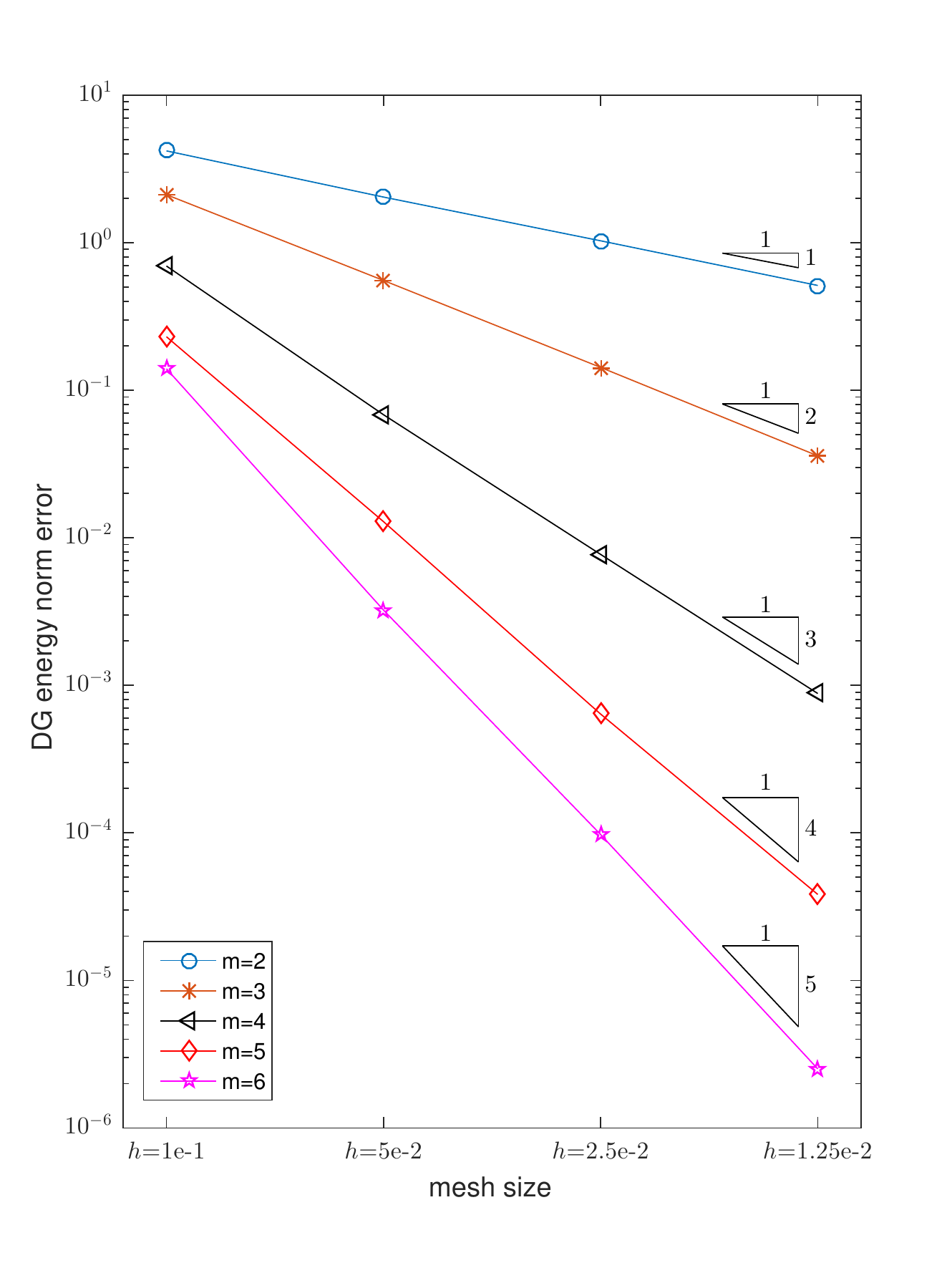}
  \caption{Examples 1: The convergence rate in the L$^2$ norm (left)
    and the DG norm (right) for different $m$ on triangular meshes.}
  \label{fig:test1error}
\end{figure}

\textbf{Example 2.} As we emphasize before, the local least-squares
problem~\eqref{eq:lsproblem} is independent of the element
geometry. In this example, we use the mesh generator
in~\cite{talischi2012polymesher} to obtain a series of polygonal
meshes from the Voronoi diagram of a given set of their reflections;
see Figure~\ref{fig:polymesh}. We also take~\eqref{eq:ex1} as the
exact solution. The number $\# S(K)$ is chosen as in the third row of
Table~\ref{tab:patchnumber2d}.

For each fixed $m$, we present the errors in both the DG norm and the
$L^2$ norm against the number of elements; see
Figure~\ref{fig:test2error}. It is clear that the numerical results
still agree well with the theoretical results.
%
\begin{figure}
  \centering
  \includegraphics[width=0.4\textwidth]{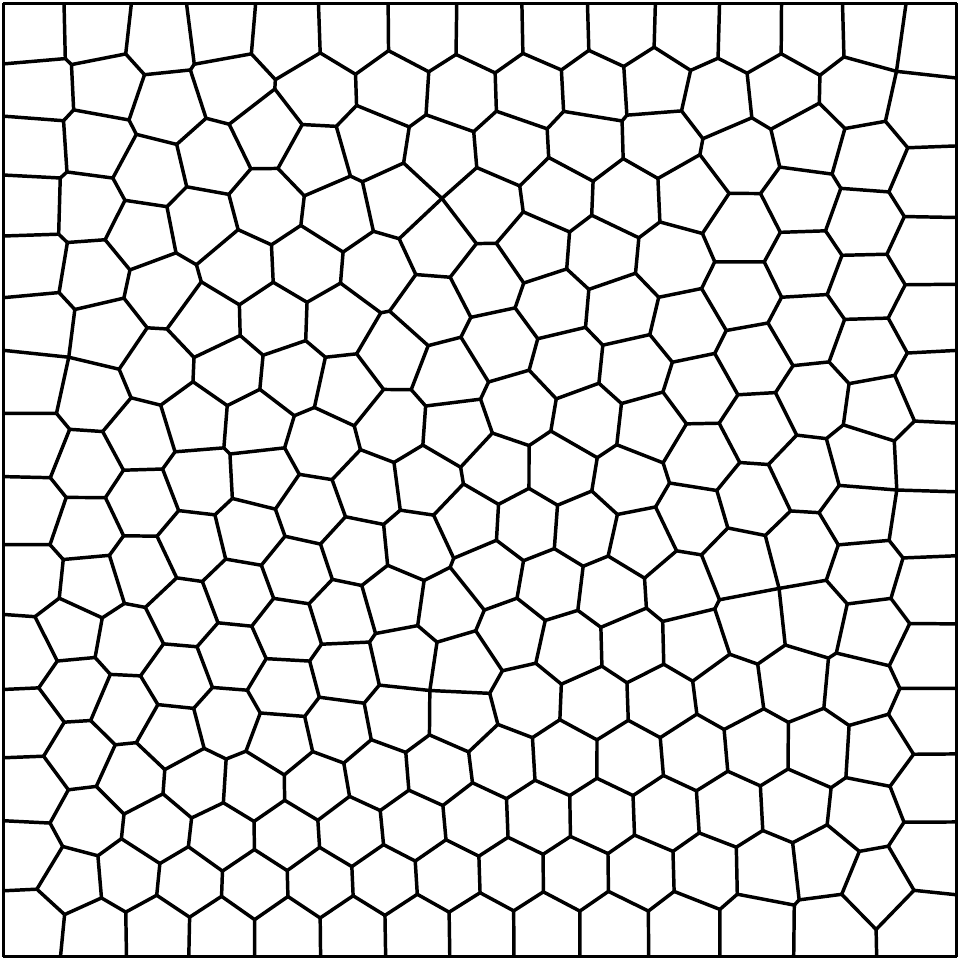}
  \hspace{25pt}
  \includegraphics[width=0.4\textwidth]{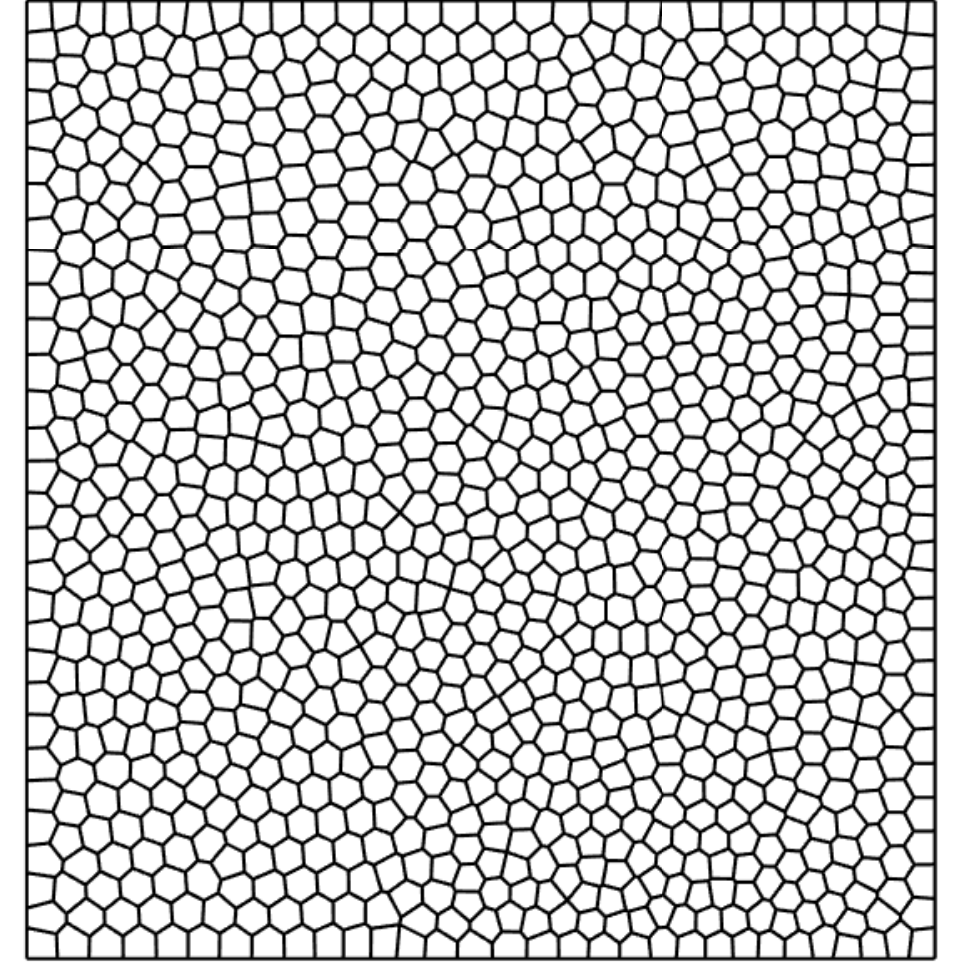}
  \caption{The Voronoi meshes for Example 2.}
  \label{fig:polymesh}
\end{figure}

\begin{figure}
  \centering
  \includegraphics[width=0.48\textwidth]{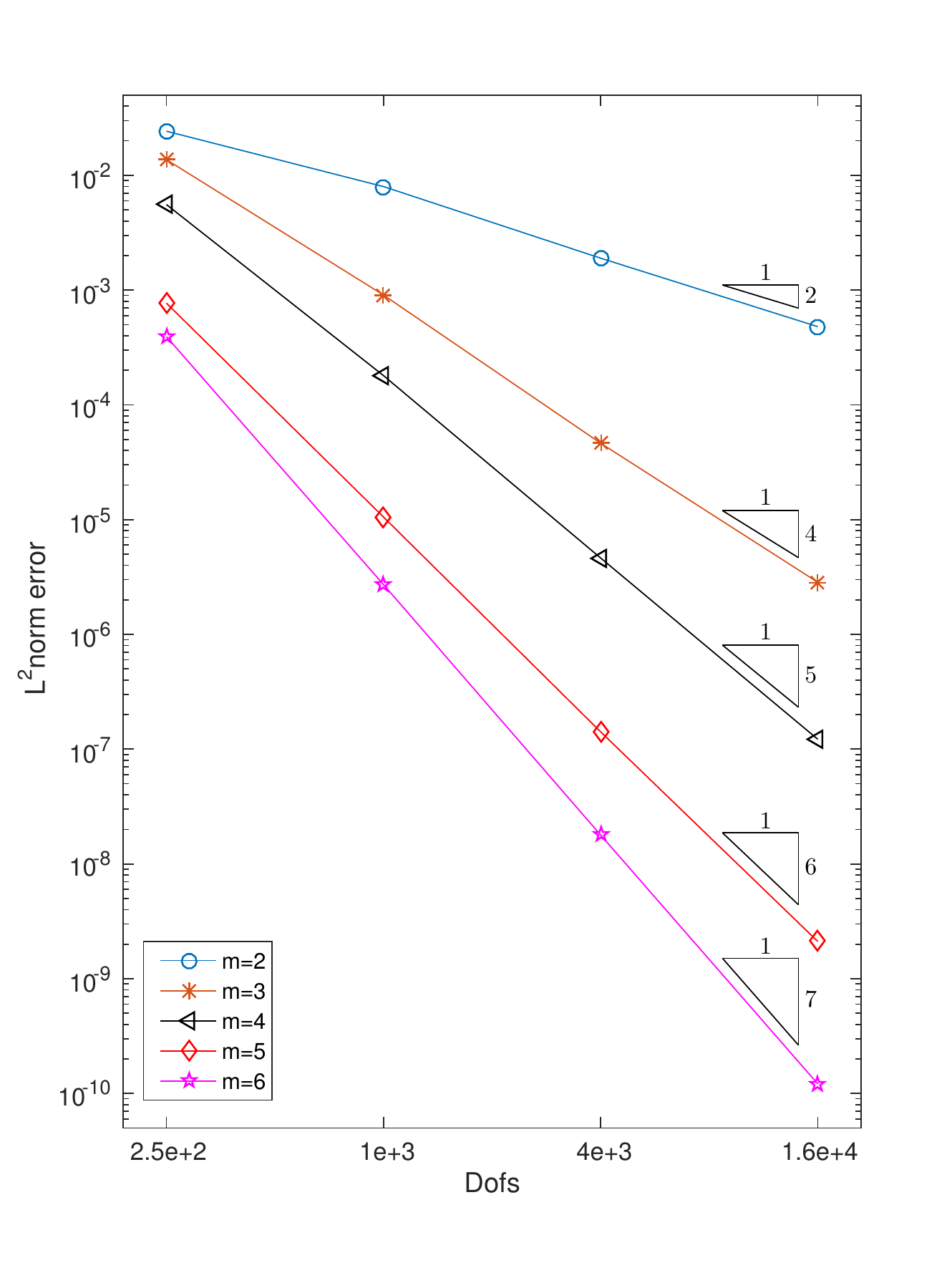}
  \includegraphics[width=0.48\textwidth]{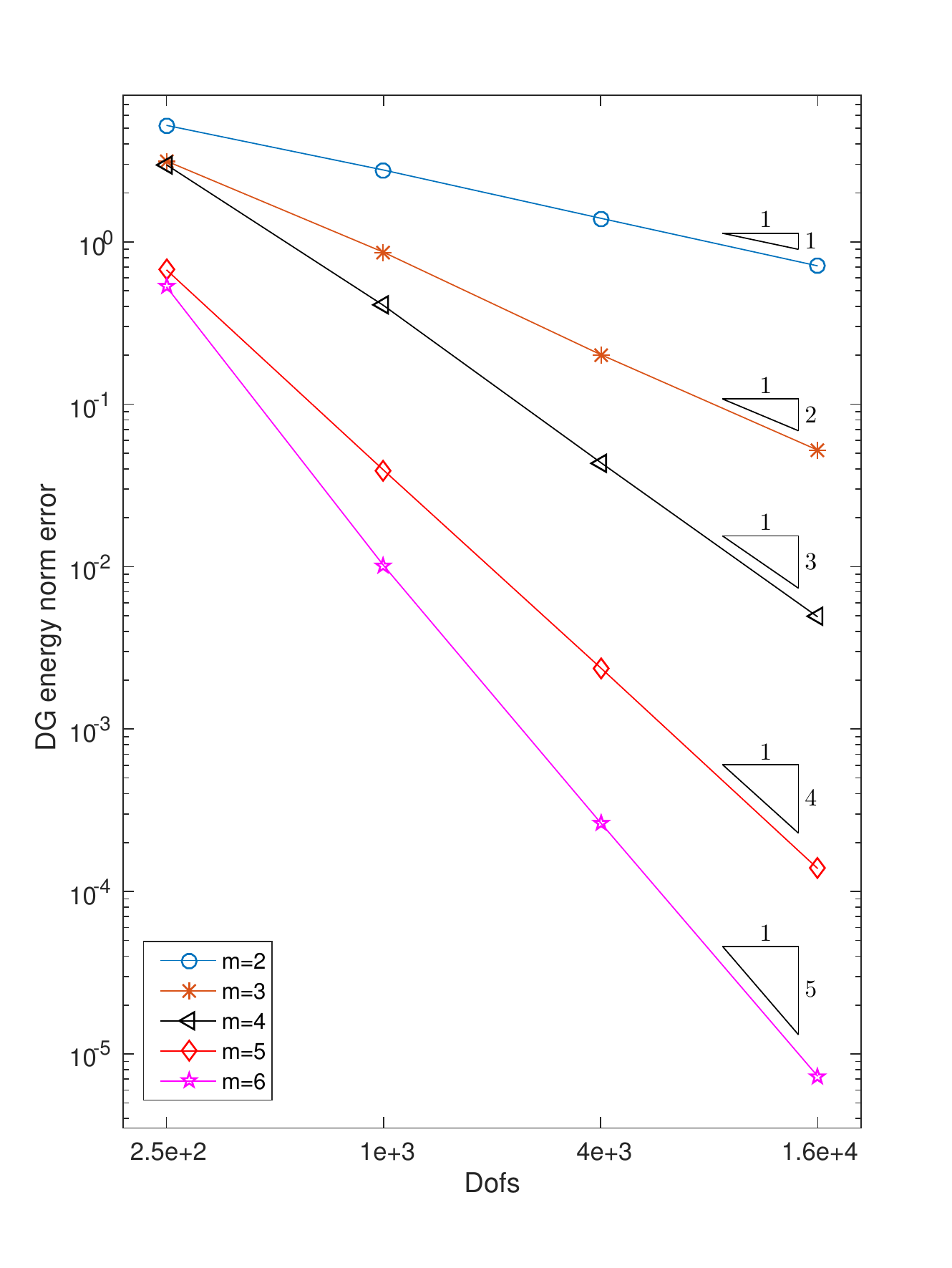}
  \caption{Examples 2: The convergence rate in the L$^2$ norm (left)
    and the DG energy norm (right) for different $m$ on Voronoi
    meshes.}
  \label{fig:test2error}
\end{figure}

\textbf{Example 3.} In this test, we consider the biharmonic problem
on $\Omega=(0,1)^2$ with the following boundary
condition~\cite{brenner2015C0}:
\[
u=\Delta u=0,\quad x\in\partial\Omega,
\]
which is related to the bending of a simply supported plate. We take
\(u(x,y)=\sin(2\pi x)\sin(2\pi y)\) as the exact solution. The mesh
consists of a mixing of triangular and quadrilateral elements, which
are generated by~\emph{gmsh}\cite{geuzaine2009gmsh}; see
Figure~\ref{fig:mixed}. The mesh size $h$ varies equally from $1/10$
to $h=1/80$. We take $\# S(K)$ as in the last row of
Table~\ref{tab:patchnumber2d}. The results in
Figure~\ref{fig:mixederror} show the convergence rate for different
$m$, which are also consistent with the theoretical results.
\begin{figure}
  \centering
  \includegraphics[width=0.4\textwidth]{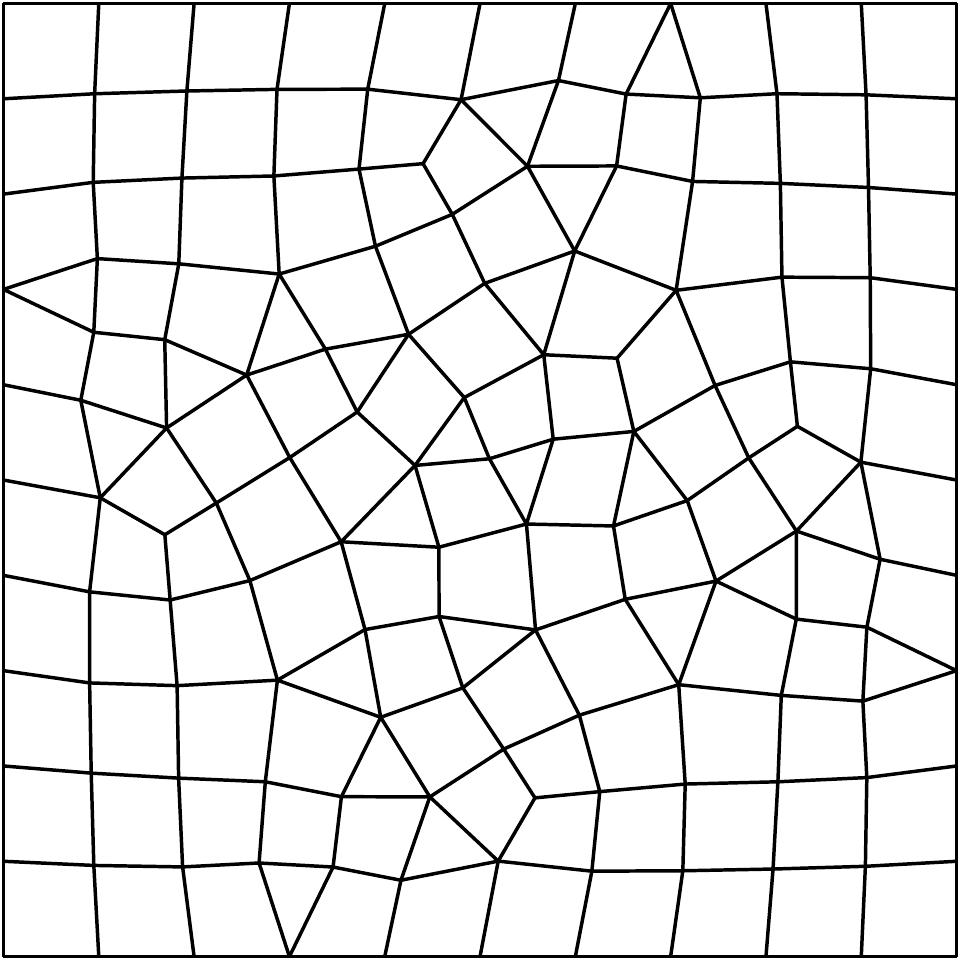}
  \hspace{25pt}
  \includegraphics[width=0.4\textwidth]{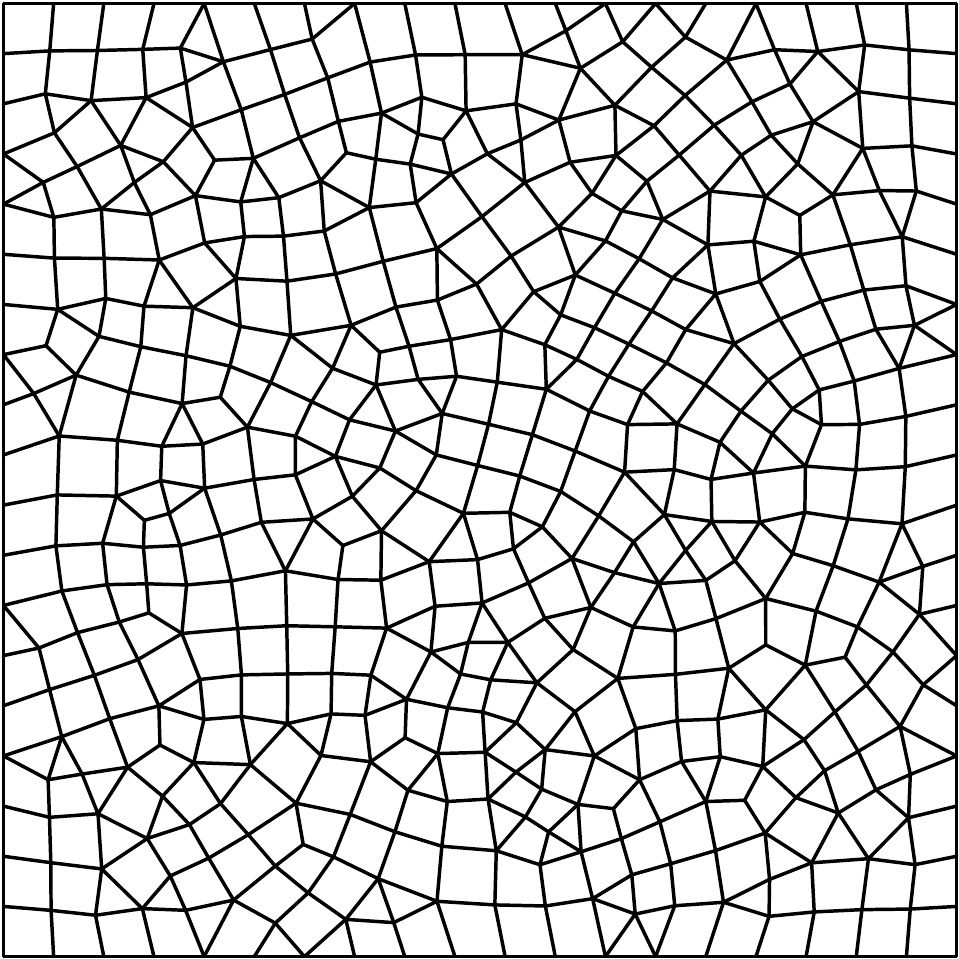}
  \caption{The mixed meshes for Example 3.}
  \label{fig:mixed}
\end{figure}
\begin{figure}
  \centering
  \includegraphics[width=0.4\textwidth]{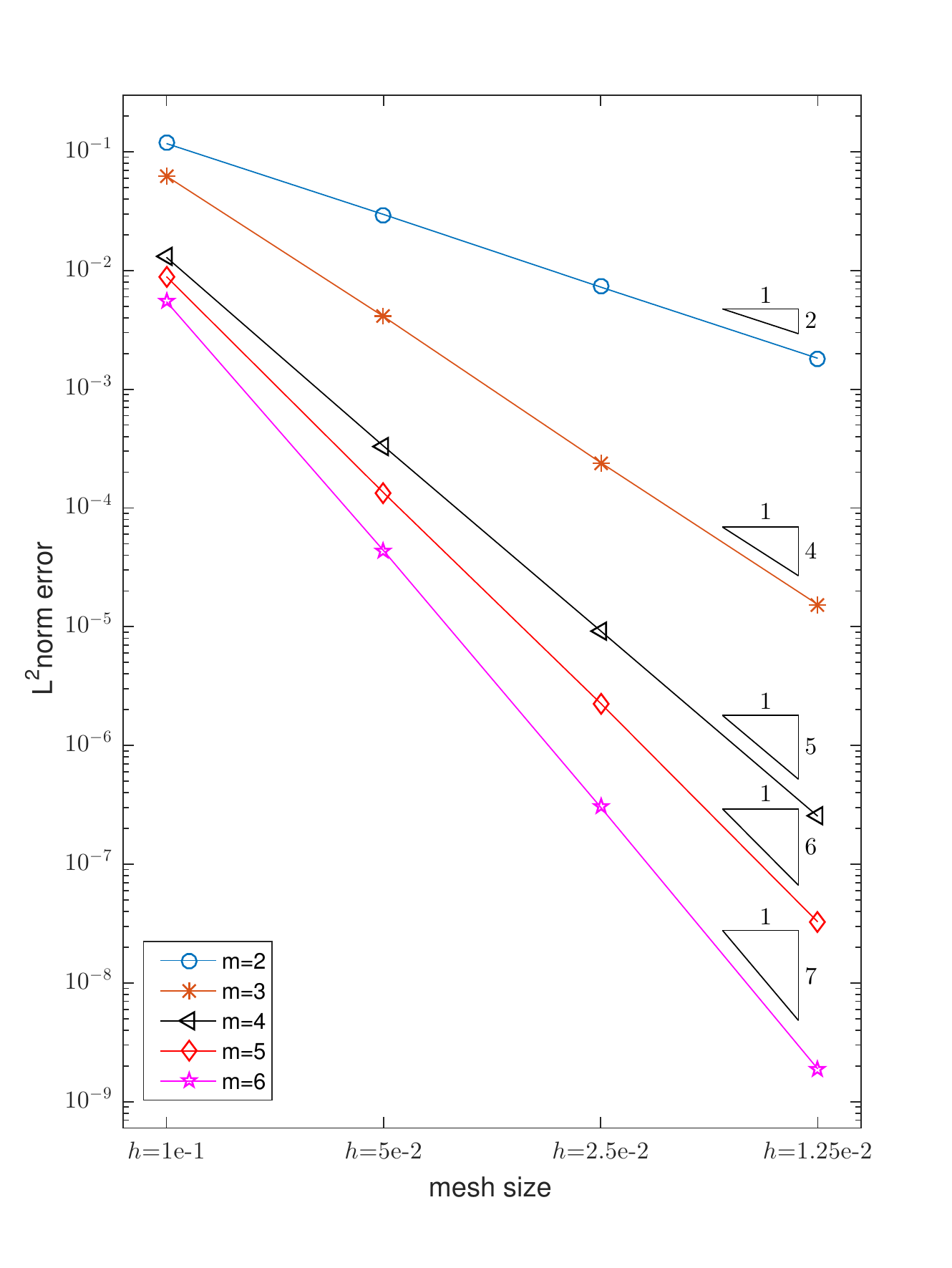}
  \includegraphics[width=0.4\textwidth]{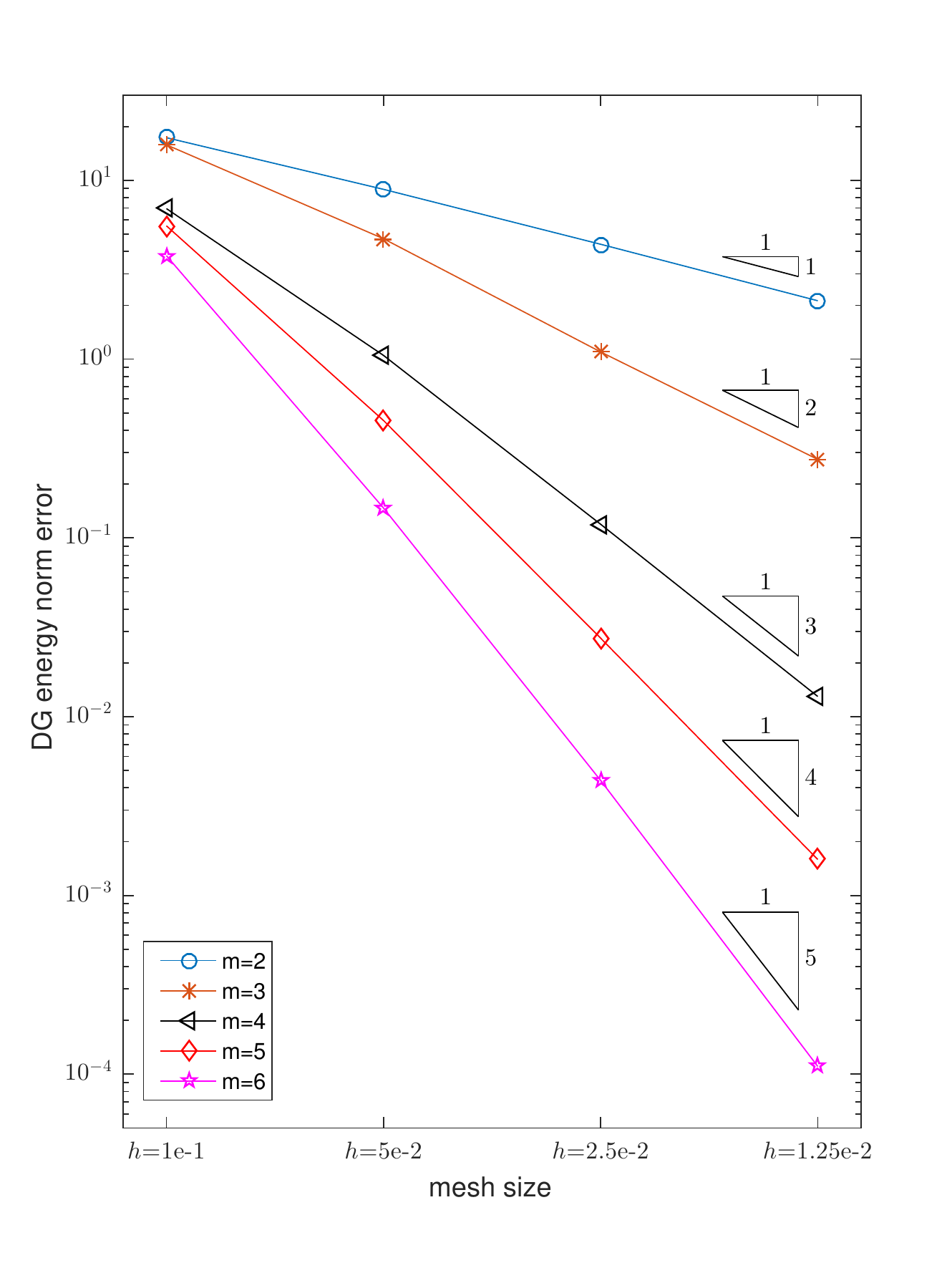}
  \caption{Examples 3: The convergence rate in the L$^2$ norm (left)
    and the DG energy norm (right) for different $m$ on mixed meshes.}
  \label{fig:mixederror}
\end{figure}

{\textbf{Example 4.} In this test, we compare the $C^0$IPG, IPDG and
the proposed method by solving the biharmonic problem in the domain
$\Omega = (0,1)^2$ with the same exact solution as for Example 1. The
meshes used for this case are obtained by successively refining an
initial mesh with $h=0.2$. We measure the errors in both the broken
$H^2$ norm and the $L^2$ norm. Figure~\ref{fig:compare1} and
Figure~\ref{fig:compare2} show the performance of three methods by
using spaces of polynomials of degree 2 and 3, respectively. We plot
the errors in both norms against the number of degrees of freedom. It
is clear that the proposed method behaves slightly better than other
methods.
\begin{figure}[htp]
  \centering
  \includegraphics[width=0.4\textwidth]{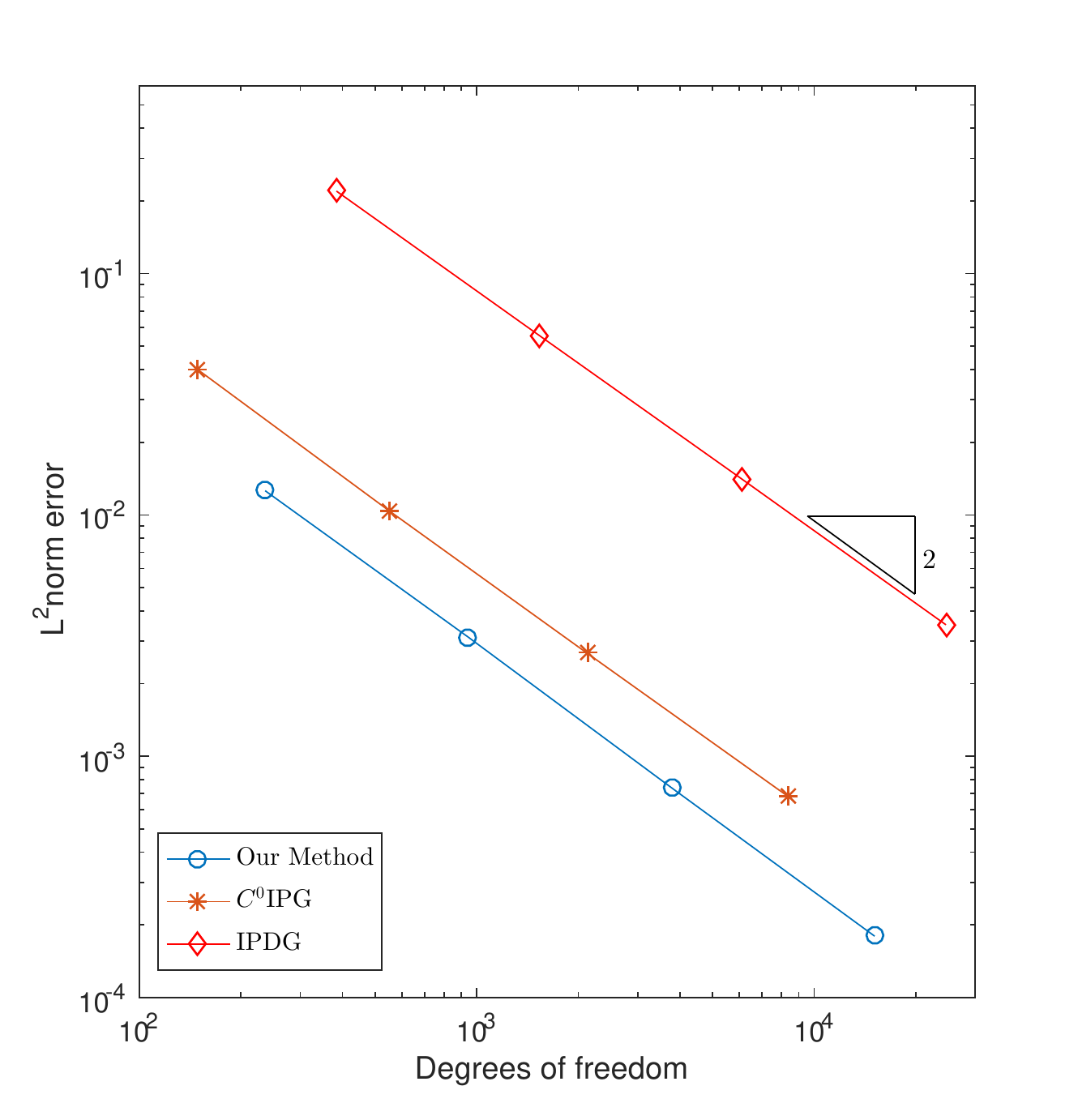}
  \hspace{0.1\textwidth}
  \includegraphics[width=0.4\textwidth]{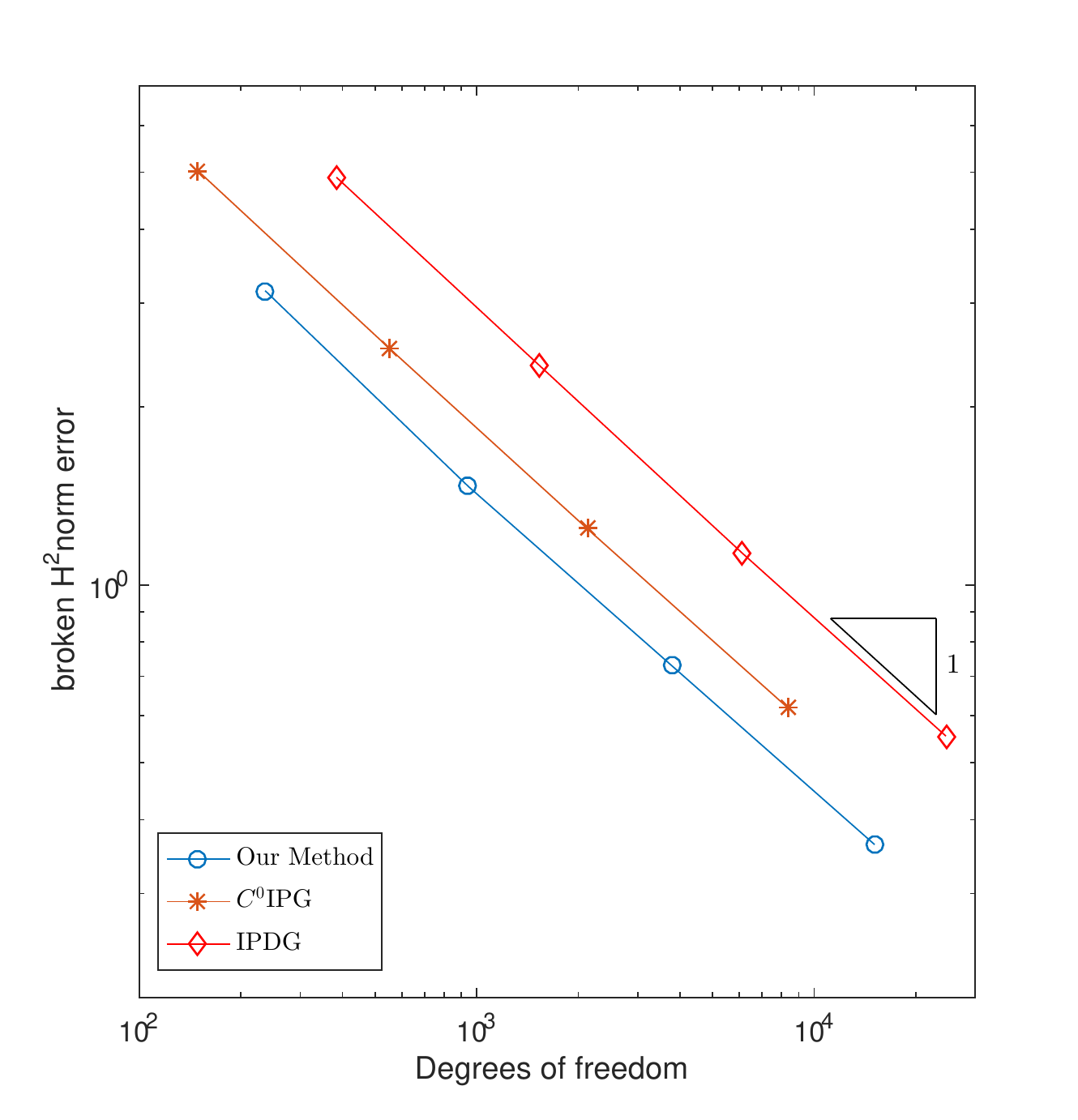}
  \caption{The error in $\| \cdot \|_{L^2(\Omega)}$ (left)/ $\|
    \cdot\|_{H^2(\Omega, \mathcal T_h)}$ (right) for three methods by
    using second order polynomials.  }
  \label{fig:compare1}
\end{figure}

\begin{figure}[htp]
  \centering
  \includegraphics[width=0.4\textwidth]{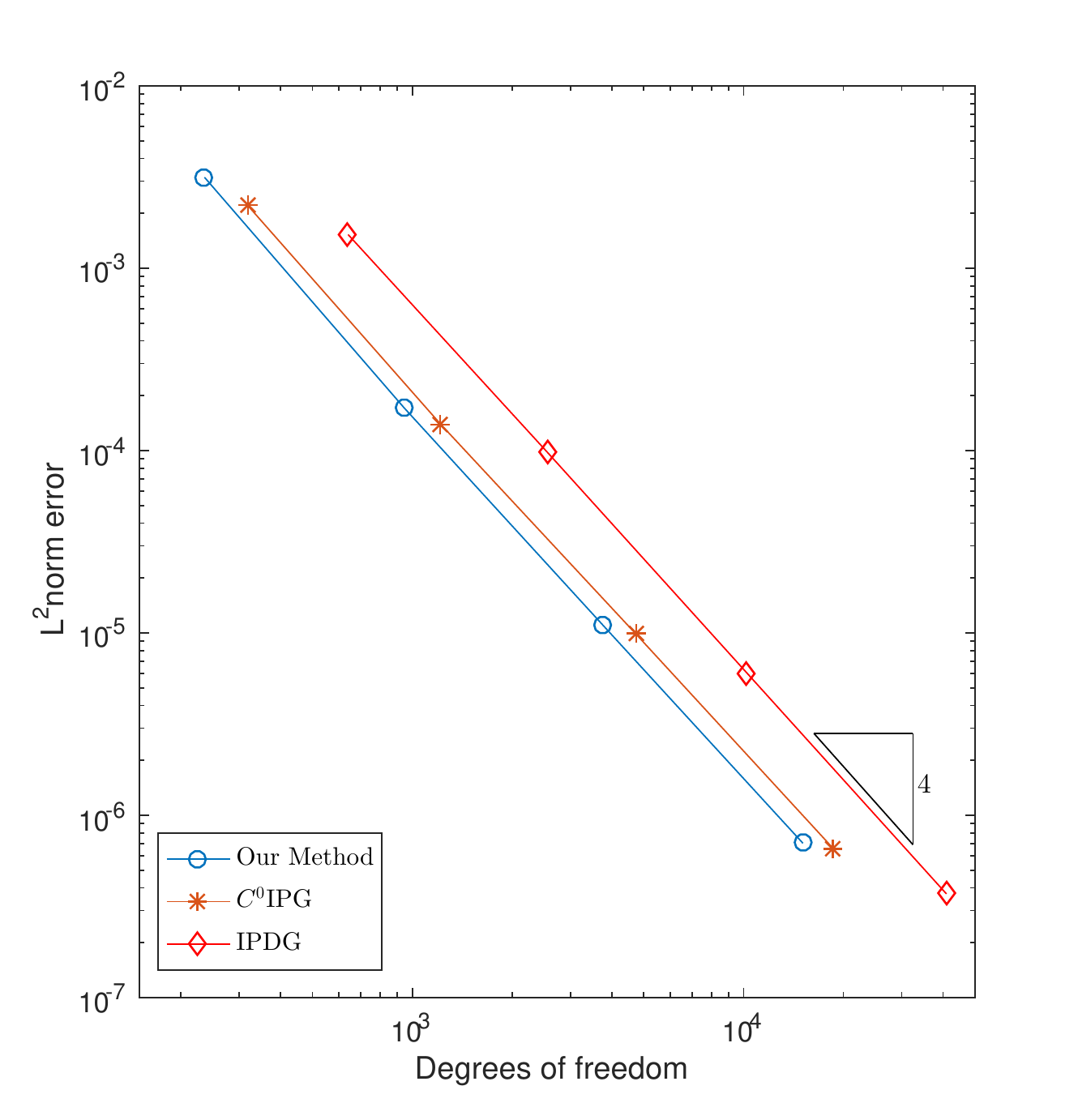}
  \hspace{0.1\textwidth}
  \includegraphics[width=0.4\textwidth]{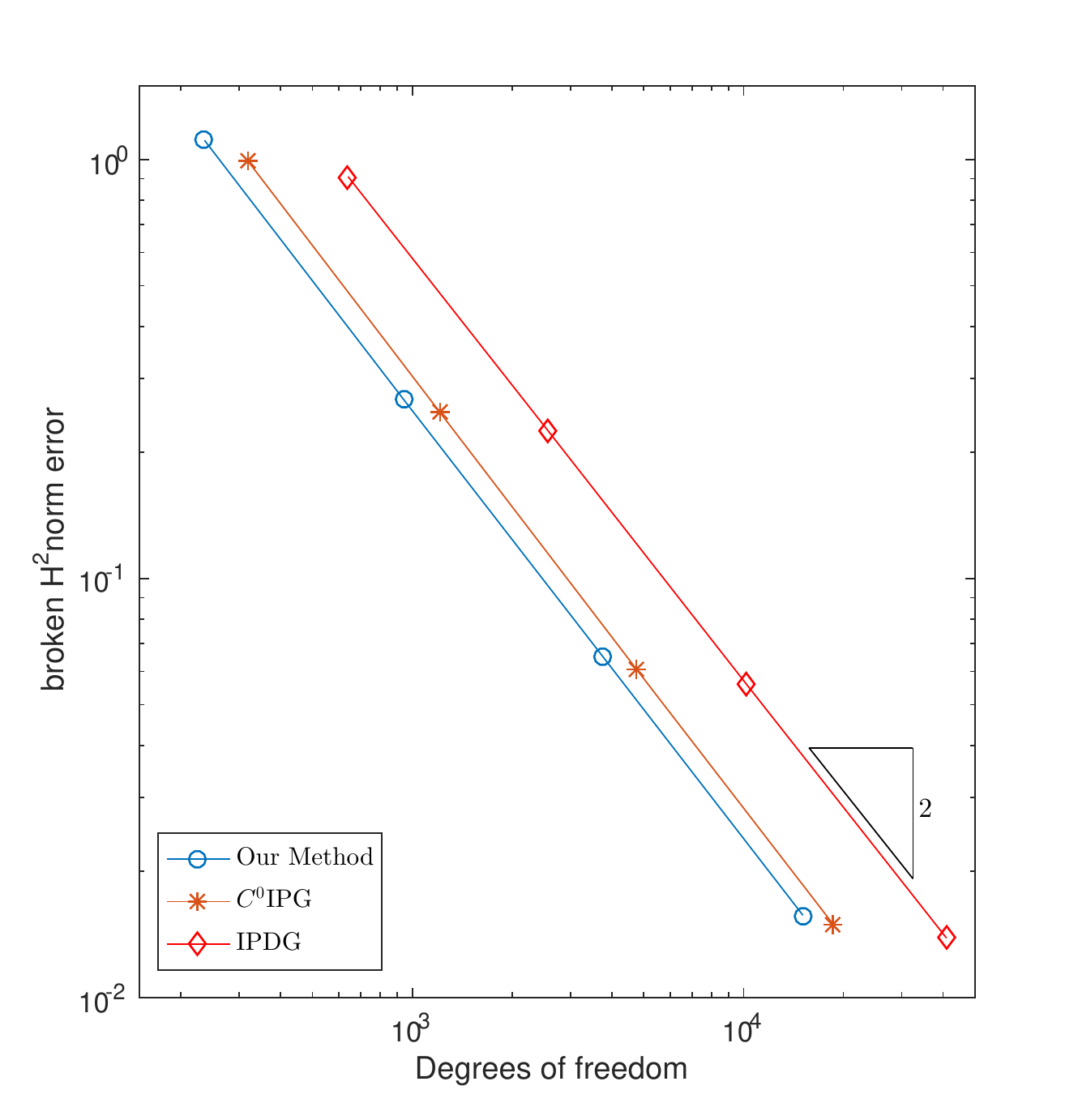}
  \caption{The error in $\| \cdot \|_{L^2(\Omega)}$ (left)/ $\|
    \cdot\|_{H^2(\Omega, \mathcal T_h)}$ (right) for three methods by
    using third order polynomials.  }
  \label{fig:compare2}
\end{figure}
}
\subsection{L-shaped domain with known exact solution}
In this example, we study the performance of the proposed method with
the problem with a corner singularity. Let $\Omega$ be the L-shaped
domain $(-1,1)^2\backslash [0,1)\times(-1,0]$ and we use triangular
meshes; see
Figure~\ref{fig:Lshape}. Following~\cite{georgoulis2008discontinuous},
we let
\[
u(r,\theta)=r^{5/3}\sin(5\theta/3)
\]
in polar coordinate and impose Dirichlet boundary condition. At the
corner $(0,0)$ the exact solution contains a singularity which
indicates $u$ only belongs to $H^{8/3-\epsilon}(\Omega)$ for
$\epsilon>0$. The number $\# S(K)$ is chosen as in the second row of
Table~\ref{tab:patchnumber2d}.

In Table~\ref{tab:Lshapeerror}, we list the error measured in the DG
norm and the $L^2$ norm against the mesh size for $m=2,3,4$.  Here we
observe that the error in $L^2$ norm decreases at the rate
$\mc{O}(h^{1.2})$ while the error in DG norm decreases at the rate
$\mc{O}(h^{2/3})$. It seems the convergence rates agree with that
in~\cite{georgoulis2008discontinuous}.
\begin{figure}
  \centering
  \includegraphics[width=0.4\textwidth]{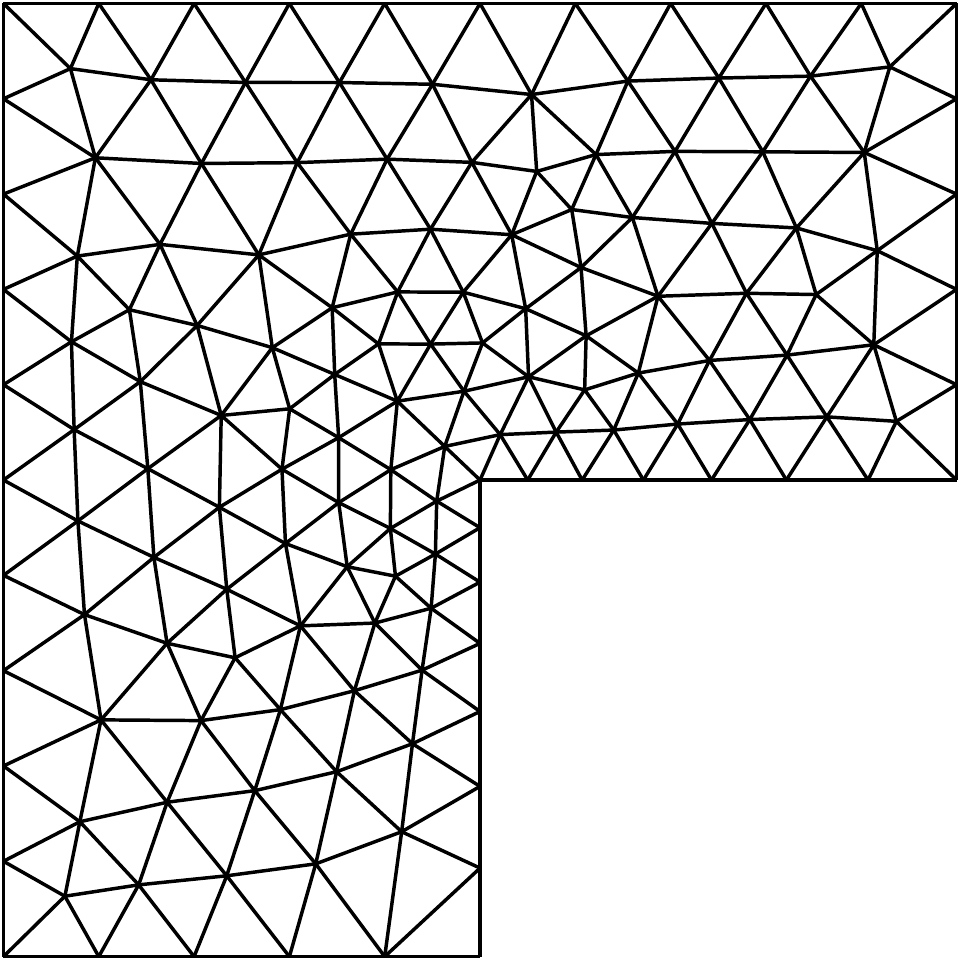}
  \hspace{25pt}
  \includegraphics[width=0.4\textwidth]{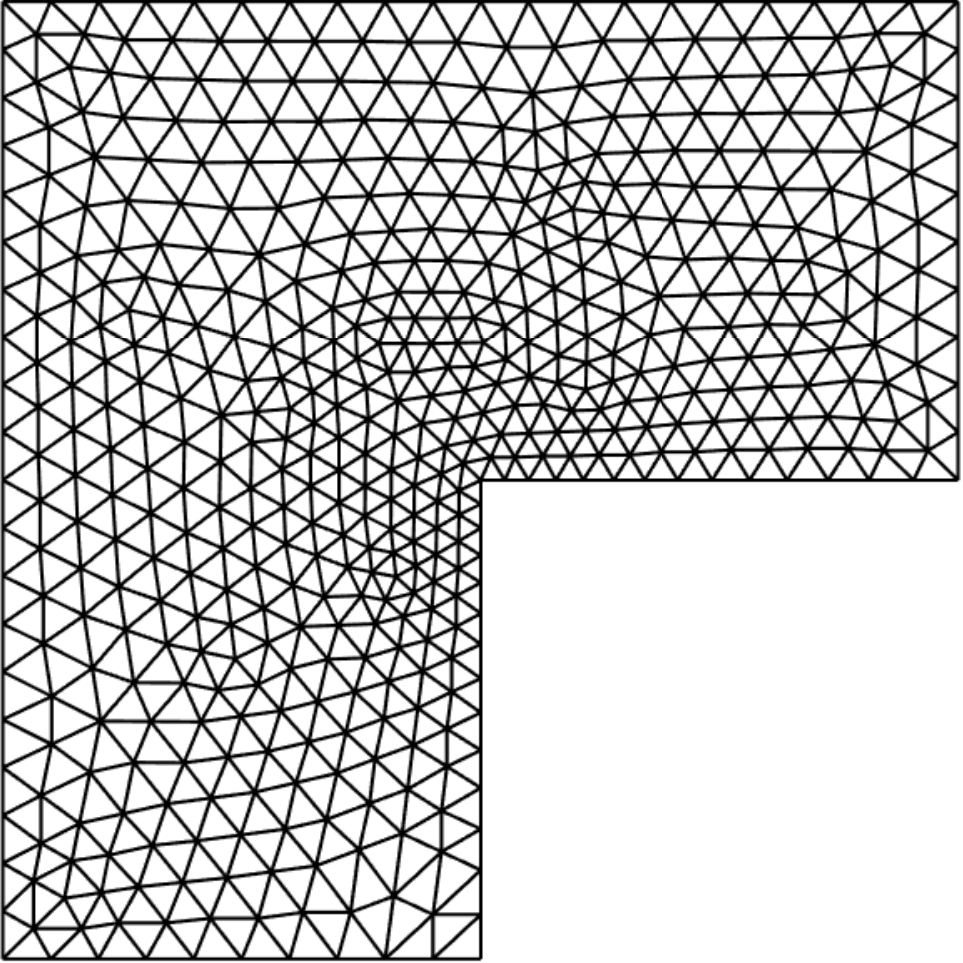}
  \caption{The triangular meshes of L-shaped domain}
  \label{fig:Lshape}
\end{figure}

\begin{table}[]
  \centering
  \caption{Convergence rates of L-shaped domain example}
  \label{tab:Lshapeerror}
  \scalebox{0.72}{
  \begin{tabular}{|l|l|l|l|l|l|l|l|l|l|l|}
    \hline \multirow{3}{*}{$m$} & \multirow{3}{*}{Norm} & Dofs & Dofs
    & \multirow{3}{*}{Order} &Dofs & \multirow{3}{*}{Order} &Dofs &
    \multirow{3}{*}{Order} & Dofs & \multirow{3}{*}{Order}
    \\ \cline{3-4} \cline{6-6} \cline{8-8} \cline{10-10} & & $250$ &
    $1000$ & & $4000$ & & $16000$ & &$64000$ & \\ \cline{3-4}
    \cline{6-6} \cline{8-8} \cline{10-10}& & Error & Error & & Error &
    & Error & & Error & \\ \hline \multirow{2}{*}{2} &
    $\|\cdot\|_{L^2(\Omega)}$ & $1.38\mathrm e$-3 & $6.15\mathrm e$-4
    & 1.17 & $2.68\mathrm e$-4 & 1.23 & $1.17\mathrm e$-4 & 1.20 &
    $5.13\mathrm e$-5 & 1.19 \\ \cline{2-11} & $\|\cdot\|_{h}$ &
    $3.35\mathrm e$-1 & $2.03\mathrm e$-1 & 0.72 & $1.23\mathrm e$-1 &
    0.72 & $7.63\mathrm e$-2 & 0.69 & $4.73\mathrm e$-2 & 0.68
    \\ \hline \multirow{2}{*}{3} & $\|\cdot\|_{L^2(\Omega)}$ &
    $8.58\mathrm e$-4 & $3.11\mathrm e$-4 & 1.47 & $1.22\mathrm e$-4 &
    1.35 & $5.33\mathrm e$-5 & 1.19 & $2.99\mathrm e$-5 & 1.21
    \\ \cline{2-11} & $\|\cdot\|_{h}$ & $2.42\mathrm e$-1 &
    $1.31\mathrm e$-1 & 0.88 & $8.43\mathrm e$-1 & 0.64 & $5.33\mathrm
    e$-2 & 0.66 & $3.37\mathrm e$-2 & 0.66 \\ \hline
    \multirow{2}{*}{4} & $\|\cdot\|_{L^2(\Omega)}$ & $1.08\mathrm e$-3
    & $3.19\mathrm e$-4 & 1.76 & $1.11\mathrm e$-4 & 1.52 &
    $4.56\mathrm e$-5 & 1.28 & $1.95\mathrm e$-5 & 1.22
    \\ \cline{2-11} & $\|\cdot\|_{h}$ & $3.43\mathrm e$-1 &
    $1.76\mathrm e$-1 & 0.96 & $1.08\mathrm e$-1 & 0.71 & $6.78\mathrm
    e$-2 & 0.67 & $4.25\mathrm e$-2 & 0.67 \\ \hline
  \end{tabular}}
\end{table}
\subsection{3D Smooth Solution}
In this example, we solve a three-dimensional biharmonic problem on a
unit cube $\Omega=(0,1)^3$.  The domain is partitioned into
tetrahedral meshes with mesh size $h=1/4, 1/8, 1/16$ and $h=1/32$ by
\emph{gmsh}. The exact solution is chosen as $u(x,y,z)=\sin^2(\pi
x)\sin^2(\pi y)\sin^2(\pi z)$ and the function $g_D, g_N$ and $f$ are
taken suitably. We build the reconstruction operator with different
polynomial degrees $m=2,3,4$.  The number $\# S(K)$ is listed in
Table~\ref{tab:patchnumber3D}.
\begin{table}[]
  \centering
  \caption{Uniform $\# S(K)$ for 3D smooth solution}
  \label{tab:patchnumber3D}
  \scalebox{1.00}{
  \begin{tabular}{|c|l|l|l|}
    \hline polynomial degree $m$ & 2 & 3 & 4\\ \hline all $\# S(K)$ &
    21 & 40 & 62 \\ \hline
  \end{tabular}
  }
\end{table}

The numerical results are presented in Table~\ref{tab:3derror}. The
convergence rates also agree with the theoretical prediction.
\begin{table}[]
  \centering
  \caption{Convergence rates of 3D example}
  \label{tab:3derror}
  \scalebox{0.85}{
  \begin{tabular}{|l|l|l|l|l|l|l|l|l|}
    \hline \multirow{3}{*}{m} & \multirow{3}{*}{Norm} & Dofs & Dofs &
    \multirow{3}{*}{Order} & Dofs & \multirow{3}{*}{Order} & Dofs &
    \multirow{3}{*}{Order} \\ \cline{3-4} \cline{6-6} \cline{8-8} & &
    384 & 3072 & & 24576 & & 196608 & \\ \cline{3-4} \cline{6-6}
    \cline{8-8} & & Error & Error & & Error & & Error & \\ \hline
    \multirow{2}{*}{2} & $\|\cdot\|_{L^2(\Omega)}$ & $7.34\mathrm e$-2
    & $1.43\mathrm e$-2 & 2.36 & $3.34\mathrm e$-3 & 2.10 &
    $8.07\mathrm e$-3 & 2.05 \\ \cline{2-9} & $\|\cdot\|_{h}$ &
    $8.87\mathrm e$-0 & $4.59\mathrm e$-0 & 0.95 & $2.41\mathrm e$-0 &
    0.93 & $1.22\mathrm e$-0 & 0.98 \\ \hline \multirow{2}{*}{3} &
    $\|\cdot\|_{L^2(\Omega)}$ & $3.34\mathrm e$-2 & $3.76\mathrm e$-3
    & 3.15 & $2.50\mathrm e$-4 & 3.91 & $1.59\mathrm e$-5 & 3.97
    \\ \cline{2-9} & $\|\cdot\|_{h}$ & $6.89\mathrm e$-0 &
    $2.19\mathrm e$-0 & 1.66 & $5.86\mathrm e$-1 & 1.90 & $1.50\mathrm
    e$-1 & 1.97 \\ \hline \multirow{2}{*}{4} &
    $\|\cdot\|_{L^2(\Omega)}$ & $2.83\mathrm e$-2 & $8.96\mathrm e$-4
    & 4.98 & $2.43\mathrm e$-5 & 5.19 & $7.11\mathrm e$-7 & 5.09
    \\ \cline{2-9} & $\|\cdot\|_{h}$ & $5.09\mathrm e$-0 &
    $1.03\mathrm e$-0 & 2.31 & $1.42\mathrm e$-1 & 2.86 & $1.68\mathrm
    e$-2 & 3.08 \\ \hline
  \end{tabular}}
\end{table}
\section{Conclusion}\label{sec:conclusion}
We propose a new discontinuous Galerkin method to solve the biharmonic
boundary value problem.  A novelty of the method is a new
discontinuous polynomial space that is reconstructed by solving local
least-squares.  The optimal error estimates in both the DG energy norm
and the $L^2$ norm are proved, which are confirmed by a series of
numerical examples with different complexity.

\bibliographystyle{abbrv}
\bibliography{ref}

\end{document}